\theoremstyle{thmstyleone}%
\newtheorem{theorem}{Theorem}
\newtheorem{lemma}{Lemma}%
\newtheorem{proposition}[theorem]{Proposition}%
\theoremstyle{thmstyletwo}%
\newtheorem{example}{Example}%
\newtheorem{remark}{Remark}%
\theoremstyle{thmstylethree}%
\newcommand{\me}{\mathbb{E}}
\newcommand{\mr}{\mathbb{R}}
\newcommand{\mn}{\mathbb{N}}
\newcommand{\mmp}{\mathbb{P}}
\DeclareMathOperator{\1}{\mathbbm{1}}
\newcommand{\eee}{{\rm e}}
\begin{document}

\title{On local large deviations for decoupled random walks}


\author[1]{\fnm{Dariusz} \sur{Buraczewski}}\email{dariusz.buraczewski@uwr.edu.pl}

\author[2]{\fnm{Alexander} \sur{Iksanov}}\email{iksan@univ.kiev.ua}

\author*[2,3]{\fnm{Alexander} \sur{Marynych}}\email{marynych@knu.ua, o.marynych@qmul.ac.uk}

\affil[1]{\orgdiv{Mathematical Institute}, \orgname{University of Wroc{\l}aw}, \orgaddress{\street{Pl. Grunwaldzki 2/4}, \city{Wroclaw}, \postcode{50384}, \country{Poland}}}

\affil[2]{\orgdiv{Faculty of Computer Science and Cybernetics}, \orgname{Taras Shevchenko National University of Kyiv}, \orgaddress{\street{Volodymyrska 60}, \city{Kyiv}, \postcode{01601}, \country{Ukraine}}}

\affil[3]{\orgdiv{School of Mathematical Sciences}, \orgname{Queen Mary University of London}, \orgaddress{\street{Mile End Road}, \city{London}, \postcode{E14NS}, \country{United Kingdom}}}


\abstract{A decoupled standard random walk is a sequence of independent
random variables $(\hat{S}_n)_{n \geq 1}$ such that, for each $n \geq
1$, the distribution of $\hat{S}_n$ is the same as that of $S_n = \xi_1
+ \ldots + \xi_n$, where $(\xi_k)_{k \geq 1}$ are independent copies of
a nonnegative random variable $\xi$. We consider the counting process
$(\hat{N}(t))_{t\geq 0}$ defined as the number of terms $\hat{S}_n$ in
the sequence $(\hat{S}_n)_{n \geq 1}$ that lie within the interval $[0,
t]$. Under various assumptions on the tail distribution of $\xi$, we
derive logarithmic asymptotics for the local large deviation probabilities
$\mathbb{P}\{\hat{N}(t) = \lfloor b \, \mathbb{E}[\hat{N}(t)]
\rfloor\}$ as $t \to \infty$ for a fixed constant $b > 0$. These
results are then applied to obtain a logarithmic local large deviations asymptotic
for the counting process associated with the infinite Ginibre ensemble
and, more generally, for determinantal point processes with the
Mittag-Leffler kernel.}

\keywords{decoupled random walk, determinantal point process with the Mittag-Leffler kernel, Ginibre point process, local large deviations}

\pacs[MSC Classification]{Primary: 60F10,60F05; Secondary: 60G55}

\maketitle

\section{Introduction}

Let $\xi_1$, $\xi_2,\ldots$ be independent copies of a nonnegative random variable $\xi$ with a nondegenerate distribution. Put $S_n=\xi_1+\ldots+\xi_n$ for $n\in\mn$ and then $N(t):=\sum_{n\geq 1}\1_{\{S_n\leq t\}}$ for $t\geq 0$. The random sequence $(S_n)_{n\geq 1}$ is called {\it standard random walk} with nonnegative jumps and the random process $(N(t))_{t\geq 0}$ is called {\it renewal process}. Let $\hat S_1$, $\hat S_2,\ldots$ be independent random variables such that, for each $n\in\mn$, $\hat S_n$ has the same distribution as $S_n$. Put $\hat N(t):=\sum_{n\geq 1}\1_{\{\hat S_n\leq t\}}$ for $t\geq 0$. Following \cite{Alsmeyer+Iksanov+Kabluchko:2025}, we call the sequence $(\hat S_n)_{n\geq 1}$ {\it decoupled standard random walk} and the process $(\hat N(t))_{t\geq 0}$ {\it decoupled renewal process}. The distribution of $\hat S_1$ coincides with the distribution of $\xi$. We shall interchangeably refer to the distribution of $\hat S_1$ or the distribution of $\xi$ depending on convenience.

Now we explain the reason behind introducing these decoupled processes. Let $\Theta$ be an {\it infinite Ginibre point process} on $\mathbb{C}$ (the set of complex numbers), that is, a simple point process such that, for any $k\in\mn$ and any mutually disjoint Borel subsets $B_1,\ldots, B_k$ of $\mathbb{C}$ $$\me \Big[ \prod_{j=1}^k \Theta(B_j)\Big]=\int_{B_1\times\ldots\times B_k} {\rm det}(C(z_i, z_j))_{1\leq i,j\leq k}\,{\rm d}z_1\ldots {\rm d}z_k.$$ Here, ${\rm det}$ denotes the determinant, $C(u,w)=\pi^{-1}\eee^{u\bar w-|u|^2/2-|w|^2/2}$ for $u,w\in \mathbb{C}$ ($\bar w$ is the complex conjugate of $w$). Thus, $\Theta$ is a {\it determinantal point process} with kernel $C$ with respect to Lebesgue measure on $\mathbb{C}$. We refer to the book~\cite{Hough+Krishnapur+Peres+Virag:2009}, which contains a wealth of information on the determinantal point processes. A discussion of the Ginibre point processes can be found in Sections 4.3.7 and 4.7 of that book and in Part I of the very recent monograph \cite{Byun+Forrester:2025}.

A link between the infinite Ginibre point processes and the decoupled standard random walks is established with the help of the infinite version of Kostlan's result \cite{Kostlan:1992}, stated in Theorem 1.1 of~\cite{Fenzl+Lambert:2022}. The result says that the set of absolute values of atoms of $\Theta$ has the same distribution as $((\hat S_n)^{1/2})_{n\geq 1}$, where
\begin{equation}\label{eq:exponential}
\hat S_1~ \text{has the exponential distribution of unit mean}.
\end{equation}
The infinite Ginibre point process is not the only point process on $\mathbb{C}$ giving rise to a decoupled standard random walk. For $\rho>0$, define the kernel $C_\rho$ by $$C_\rho(u,w)=\frac{\rho}{2\pi}{\rm E}_{2/\rho,\,2/\rho}(u\bar w)\eee^{-|u|^\rho/2-|w|^\rho/2},\quad u,w\in \mathbb{C}.$$ Here, for $a,b>0$, ${\rm E}_{a,\,b}$ denotes the Mittag-Leffler function with parameters $a$ and $b$ given by $${\rm E}_{a,\,b}(z):=\sum_{k\geq 0}\frac{z^k}{\Gamma(ak+b)},\quad z\in \mathbb{C},$$ and $\Gamma$ is the Euler gamma-function. It is stated on pp.~3-4 in \cite{Adhikari:2018} that the set of absolute values of atoms of a determinantal point process $\Theta_\rho$ with kernel $C_\rho$ with respect to Lebesgue measure on $\mathbb{C}$ has the same distribution as $((\hat S_n)^{1/\rho})_{n\geq 1}$, where $\hat S_1$ has the gamma distribution with parameters $2/\rho$ and $1$, that is,
\begin{equation}\label{eq:gamma}
\mmp\{\hat S_1\in{\rm d}x\}=\frac{1}{\Gamma(2/\rho)}x^{2/\rho-1}\eee^{-x}\1_{(0,\infty)}(x){\rm d}x.
\end{equation}
Here and in what follows, $x\mapsto\1_A(x)$ denotes the indicator function, defined by $\1_{A}(x)=1$ if $x\in A$ and $\1_{A}(x)=0$ if $x\not\in A$. Plainly, the case $\rho=2$ corresponds to the infinite Ginibre point process. For each $t\geq 0$, let $\Theta_\rho(D_t)$ denote the number of atoms of $\Theta_\rho$ inside the disk $D_t:=\{z\in \mathbb{C}: |z|<t\}$. Then
\begin{equation}\label{eq:principal}
(\Theta_\rho(D_t))_{t\geq 0}~~\text{has the same distribution as}~~(\hat N(t^\rho))_{t\geq 0}=\Big(\sum_{n\geq 1}\1_{\{\hat S_n\leq t^\rho\}}\Big)_{t\geq 0},
\end{equation}
with $\hat S_1$ as in \eqref{eq:gamma}.

Further, according to Theorem 3.1 in \cite{Akemann+Strahov:2013}, the set of absolute values of atoms of a generalized infinite Ginibre point process parameterized by $m\in\mn$ has the same distribution as $((\hat S^{(1)}_n\hat S^{(2)}_{n}\cdot\ldots\cdot \hat S^{(m)}_n)^{1/2})_{n\geq 1}$, where $(\hat S^{(1)}_n)_{n\geq 1},\ldots, (\hat S^{(m)}_n)_{n\geq 1}$ are independent copies of $(\hat S_n)_{n\geq 1}$, with $\hat S_1$ as in \eqref{eq:exponential}.

In \cite{Alsmeyer+Iksanov+Kabluchko:2025}, the asymptotic behavior of $\log \mmp\{\hat N(t)=0\}=\log \mmp\{\min_{n\geq 1}\,\hat S_n>t\}$ as $t\to\infty$ was found under various assumptions imposed on the distribution of $\xi$, both in the heavy- and light-tailed scenarios. Under \eqref{eq:exponential}, the asymptotic behavior of $\log \mmp\{\hat N(t)=\lfloor b\me [\hat N(t)]\rfloor\}$ as $t\to\infty$ for $b\geq 0$ was investigated 
in Theorem 1.1 of \cite{Shirai:2006}. Our first purpose is to extend the two aforementioned results. Namely, we aim at determining the asymptotic behavior of $\log \mmp\{\hat N(t)=\lfloor b\me [\hat N(t)]\rfloor\}$ as $t\to\infty$ for $b>0$, $b\neq 1$ under the same set assumptions imposed on the distribution of $\xi$ as in \cite{Alsmeyer+Iksanov+Kabluchko:2025}. Thus, we are concerned with the {\it local large deviations} of $\hat N(t)$. The case $b=1$ belongs to the realm of local central limit theorems. Although the corresponding result is less interesting than that in the case $b\neq 1$, it will be given for completeness.

Under \eqref{eq:exponential}, it was shown in \cite{Forrester:1992} that
\begin{equation}\label{eq:forrest}
\mmp\{\hat N(t)=0\}=\exp\big(-t^2/4-(t\log t)/2+(1-(\log 2\pi)/2)t+ c t^{1/2}+o(t^{1/2})\big),\quad t\to\infty,
\end{equation}
where the constant $c$ is defined by a rather complicated convergent integral (see also Theorem 1.7 in~\cite{Charlier:2024}, with $g=1$, $\alpha=0$, $b=1$ and $r_2\in (0,1)$, where a few additional terms of the expansion were derived). The first three terms of the latter limit relation were also obtained in formula (4.5) of~\cite{Akemann+Philips+Shifrin:2009} via a different argument. Our second purpose is, for a wide class of distributions of $\xi$, which particularly covers the gamma distributions as in \eqref{eq:gamma},
to provide two terms of an asymptotic expansion like~\eqref{eq:forrest} for $\mmp\{\hat N(t)=\lfloor b\me [\hat N(t)]\rfloor\}$ as $t\to\infty$ for $b>0$, $b\neq 1$. Finding more terms of an expansion is beyond our reach at the moment.

\section{Main results}

\subsection{Local large deviations}

As usual, $f(t)\sim g(t)$ as $t\to A$ will mean that $\lim_{t\to A}(f(t)/g(t))=1$.

We have to distinguish the cases $\mu:=\me [\xi]=\infty$ and $\mu<\infty$. To begin with, we treat a subcase of the former case. Assume that
\begin{equation}\label{eq:domain alpha}
\mmp\{\xi>t\}~ \sim~ t^{-\alpha}\ell(t),\quad t\to\infty
\end{equation}
for some $\alpha\in [0,1)$ and some $\ell$ slowly varying at $\infty$, which particularly ensures that $\mu=\infty$. We refer the reader to Chapter I of~\cite{Bingham+Goldie+Teugels:1989} for the definitions and a comprehensive discussion of slowly and regularly varying functions.

If $\alpha\in (0,1)$, let $(W_\alpha(t))_{t\geq 0}$ denote a drift-free $\alpha$-stable subordinator with $$-\log \me \big[ \exp(-zW_\alpha(t))\big] = \Gamma(1-\alpha) tz^\alpha,\quad z\geq 0,$$ where $\Gamma$ is the Euler gamma-function. Put $W_\alpha^\leftarrow(t):=\inf\{s\geq 0: W_\alpha(s)>t\}$ for $t\geq 0$. The process $(W_\alpha^\leftarrow(t))_{t\geq 0}$ is called an
inverse $\alpha$-stable subordinator. It can be checked that
\begin{equation*} 
\me \big[\exp\big( s\Gamma(1-\alpha)W_\alpha^\leftarrow(1)\big)\big]
=\sum_{n\geq
0}\frac{s^n}{\Gamma(\alpha n+1)}={\rm E}_{\alpha,\, 1}(s),\quad s\geq 0,
\end{equation*}
which particularly shows that the variable $W_\alpha^\leftarrow(1)$ has finite power moments of all positive orders and that $\me [W_\alpha^\leftarrow(1)]=(\Gamma(1-\alpha)\Gamma(1+\alpha))^{-1}$. Put $Z_\alpha:=W_\alpha^\leftarrow(1)/\me [W_\alpha^\leftarrow(1)]$.

Assume now that $\alpha=0$ and denote by $Z_0$ a random variable with the exponential distribution of unit mean. Just in case, we note that neither a $0$-stable subordinator, nor an inverse $0$-stable subordinator exist.

For $\alpha\in [0,1)$, define the function $f_\alpha$ by $$f_\alpha(s):=\int_0^\infty\log\big((\eee^s-1)\mmp\{Z_\alpha>y\}+1\big){\rm d}y,\quad s\in\mr.$$
Observe that, for each $y>0$, the integrand 
is strictly convex in $s$ as the cumulant generating function of a Bernoulli distribution with success probability $\mathbb{P}\{Z_{\alpha}>y\}\in (0,1)$. Thus, $f_\alpha$ is strictly convex as a mixture of strictly convex functions.
Put $U(t):=\me [\hat N(t)]=\me [N(t)]$ for $t\geq 0$. The function $U+1$ is called {\it renewal function}.
\begin{theorem}\label{thm:infinite deviation}
Let $b>0$, $b\neq 1$. Suppose~\eqref{eq:domain alpha} with $\alpha\in [0,1)$. Then
\begin{equation}\label{eq:thm1}
\lim_{t\to\infty}\frac{-\log \mmp\{\hat N(t)=\lfloor b\, U(t)\rfloor\}}{U(t)}=J_\alpha(b)\in (0,\infty).
\end{equation}
Here, $J_\alpha$ is the Legendre transform of $f_\alpha$ given by $$J_\alpha(x):=\sup_{s\in\mr}\,(sx-f_\alpha(s)),\quad x>0$$ and
\begin{equation}\label{eq:ren asymp}
U(t)~\sim~\frac{1}{\Gamma(1-\alpha)\Gamma(1+\alpha)}\frac{t^\alpha}{\ell(t)},\quad t\to\infty.
\end{equation}
\end{theorem}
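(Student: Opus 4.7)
The plan is to exploit the key observation that, since the $\hat{S}_n$ are independent, the counting variable admits the decomposition
$$\hat{N}(t) \;=\; \sum_{n\geq 1} B_n(t), \qquad B_n(t) := \1_{\{\hat{S}_n\leq t\}},$$
where $B_n(t)$ are \emph{independent} Bernoulli variables with success probabilities $p_n(t) := \mmp\{S_n\leq t\} = \mmp\{N(t)\geq n\}$, the last identity relating the decoupled and the coupled worlds. This reduces the theorem to a local large deviation for a sum of independent (non-identically distributed) indicators, for which the natural tool is exponential tilting combined with a Gärtner--Ellis type argument on the scale $U(t)$.

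The centerpiece is to show that the rescaled cumulant generating function
$$\Lambda_t(s) \;:=\; \log\me[\eee^{s\hat{N}(t)}] \;=\; \sum_{n\geq 1}\log\bigl(1+(\eee^s-1)p_n(t)\bigr)$$
satisfies $\Lambda_t(s)/U(t)\to f_\alpha(s)$ as $t\to\infty$ for every $s\in\mr$. Rewriting the sum as a Riemann sum with mesh $1/U(t)$ via $n\mapsto \lceil U(t)y\rceil$ turns it into
$$\frac{\Lambda_t(s)}{U(t)} \;\approx\; \int_0^\infty \log\!\Bigl(1+(\eee^s-1)\,\mmp\bigl\{N(t)\geq \lceil U(t)y\rceil\bigr\}\Bigr)\,{\rm d}y.$$
The classical weak convergence of the coupled renewal process, $N(t)/U(t)\Rightarrow Z_\alpha$ (the Dynkin--Lamperti / inverse $\alpha$-stable limit under~\eqref{eq:domain alpha}, with the exponential limit when $\alpha=0$), yields pointwise convergence of the integrand to $\log\bigl(1+(\eee^s-1)\mmp\{Z_\alpha>y\}\bigr)$, since $Z_\alpha$ has a continuous distribution. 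Passage to the limit under the integral is achieved by dominated convergence, with the dominating function built from $|s|$ near $y=0$ (where the argument of the logarithm lies between $\eee^{-|s|}$ and $\eee^{|s|}$) and from Potter-type uniform tail estimates for $\mmp\{N(t)\geq n\}/U(t)$ in the large-$y$ regime, using regular variation of $\ell$. The renewal asymptotic~\eqref{eq:ren asymp} is the standard Dynkin--Lamperti theorem for $\alpha\in[0,1)$.

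Given the convergence $\Lambda_t(s)/U(t)\to f_\alpha(s)$, I would derive~\eqref{eq:thm1} by a classical tilting argument. Since $f_\alpha(0)=0$, $f_\alpha'(0)=\me[Z_\alpha]=1$, and $f_\alpha$ is strictly convex (as already noted in the excerpt), the Legendre transform $J_\alpha$ is strictly convex with unique zero at $x=1$, and is finite for all $x>0$ because $f_\alpha(s)/s\to\infty$ as $s\to\pm\infty$ (thanks to the unbounded support of $Z_\alpha$ and the non-degenerate behavior of $\mmp\{Z_\alpha\leq y\}$ near $y=0$); hence $J_\alpha(b)\in(0,\infty)$ for every $b\neq 1$. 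For the target $k_t:=\lfloor b\,U(t)\rfloor$ one selects the tilt $s_t$ by $\Lambda_t'(s_t)=k_t$; strict convexity and the functional convergence ensure $s_t\to s^\ast:=(f_\alpha')^{-1}(b)$. Writing
$$\mmp\{\hat{N}(t)=k_t\} \;=\; \eee^{-s_tk_t+\Lambda_t(s_t)}\,\mmp^{(s_t)}\!\{\hat{N}(t)=k_t\},$$
where under the tilted measure $\mmp^{(s_t)}$ the summands $B_n(t)$ remain independent Bernoullis with updated parameters, the local central limit theorem for sums of independent Bernoullis with variance $\Lambda_t''(s_t)=\Theta(U(t))$ yields $\mmp^{(s_t)}\{\hat{N}(t)=k_t\}=\Theta(U(t)^{-1/2})=\eee^{o(U(t))}$. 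Dividing the displayed identity by $U(t)$ and letting $t\to\infty$ gives the claimed limit $s^\ast b-f_\alpha(s^\ast)=J_\alpha(b)$.

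The main technical obstacle is the first stage: securing the uniform control needed to pass to the limit in the Riemann sum, especially for large $|s|$, where the integrand is steep and the contribution of $y$ near the endpoints of the support of $Z_\alpha$ becomes delicate. Handling this robustly requires combining Potter's bounds for the slowly varying $\ell$ with quantitative tail estimates for $\mmp\{N(t)\geq n\}$ that are uniform in $n$, plus a small-$y$ estimate for $\mmp\{Z_\alpha\leq y\}$ obtained from the regularity of the density of $Z_\alpha$ at the origin. The tilting step is more mechanical: once the limiting cumulant generating function is pinned down and the local CLT under tilting is available for triangular arrays of Bernoullis (a standard Lyapunov/Lindeberg check given that $\Lambda_t''(s_t)\to\infty$), the logarithmic asymptotic follows routinely.
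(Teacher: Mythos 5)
Your proposal follows essentially the paper's approach: decompose $\hat N(t)$ into independent Bernoullis, tilt exponentially, identify the limiting scaled cumulant generating function $f_\alpha$ via the Dynkin--Lamperti limit for the renewal process, and close with a local CLT for triangular Bernoulli arrays giving $\eee^{o(U(t))}$ for the tilted probability. The paper's execution differs only in two minor technical choices that make the limit passage cleaner than your dominated-convergence-plus-Potter plan: it works first with $\psi_t'(s)$, exploits that $\mmp\{\nu(t)>\lfloor yU(t)\rfloor\}$ converges \emph{uniformly} on the compactified halfline to the continuous limit $\mmp\{Z_\alpha>y\}$ (monotone functions converging to a continuous limit), and then recovers $\psi_t$ by integration; and it uses a fixed tilt $s_b=(f_\alpha')^{-1}(b)$ together with $\psi_t'(s_b)=\lfloor bU(t)\rfloor+o(U(t))$ and $\psi_t''(s_b)\sim f_\alpha''(s_b)U(t)$, rather than a time-dependent $s_t$ solving $\psi_t'(s_t)=\lfloor bU(t)\rfloor$, which avoids the extra step of proving $s_t\to s^\ast$.
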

\begin{remark}\label{rem:infinite}
Since $J_\alpha(1)=0$, the limit relation of Theorem \ref{thm:infinite deviation} holds true when formally putting $b=1$ in it. However, this fact does not follow from our proof of Theorem \ref{thm:infinite deviation}. Rather, it is a consequence of the local central limit theorem (Proposition \ref{thm:local limit}). It was proved in Theorem 3.5 of \cite{Alsmeyer+Iksanov+Kabluchko:2025} that, for $\alpha\in (0,1)$, $\lim_{t\to\infty}\mmp\{\xi>t\}(-\log \mmp\{\hat N(t)=0\})=\int_0^\infty (-\log \mmp\{W_\alpha^\leftarrow(1)\leq y\}){\rm d}y$. In view of \eqref{eq:ren asymp}, this is equivalent to
\begin{equation}\label{eq:inter1}
\lim_{t\to\infty}\frac{-\log \mmp\{\hat N(t)=0\}}{U(t)}=\int_0^\infty (-\log \mmp\{Z_\alpha\leq y\}){\rm d}y.
\end{equation}
It will be shown in the proof of Theorem \ref{thm:infinite deviation} that $f_\alpha$ is a continuously differentiable function. This in combination with strict convexity of $f_\alpha$ ensures that
$J_\alpha(b)=bs_b-f_\alpha(s_b)$, where $s_b$ is the unique solution to the equation $f_\alpha^\prime(s)=b$. It can be checked that $\lim_{b\to 0+}s_b=-\infty$ and $\lim_{b\to 0+} bs_b=0$. With this at hand, relation \eqref{eq:inter1} can be obtained by formally sending $b\to 0+$ on both sides of \eqref{eq:thm1}. Of course, this argument is non-rigorous and should be deemed as such.
\end{remark}

We now turn to the situations in which $\mu < \infty$. The logarithmic local large deviations probability exhibits a universal growth rate $t^2$ for $b>1$ only, see Theorem~\ref{thm: light deviation2} below. For $b\in (0,1)$, it depends on the heaviness of the distribution tail of $\xi$. Specifically:
\begin{itemize}
    \item If the tail is \emph{very heavy}, that is, it varies regularly at infinity of a negative 
    finite index, the growth rate is $t\log t$, see Theorem~\ref{thm: regular deviation}.
    \item If the tail is \emph{moderately heavy}, that is $t\mapsto -\log \mathbb{P}\{\xi > t\}$ varies regularly of a positive index smaller than one, 
    the growth rate is $-t\log\mmp\{\xi>t\}$, see Theorem~\ref{thm:semi}.
    \item If the tail is \emph{light}, that is, $\mathbb{E}[\eee^{s\xi}] < \infty$ for some $s > 0$, the growth rate is $t^2$, see Theorem~\ref{thm: light deviation}.
\end{itemize}
The distinction between the cases $b\in (0,1)$ and $b > 1$ can be roughly explained as follows. When $b>1$, the large deviations event $\{\hat N(t) = \lfloor b\mu^{-1}t \rfloor\}$ arises from an unusually large
number of events $\{\hat S_n \leq t\}$ that occur. This happens when the sequence $(\hat S_n)_{n \geq 1}$ stays below its expected value. The behavior in this regime is governed by the distribution left tail of $\xi$. This tail is well-controlled due to the standing assumption $\xi \geq 0$, for instance, with the help of the large $s$ behavior of $s\mapsto \me [\eee^{-s\xi}]$.  When $b\in (0,1)$, the event $\{\hat N(t) = \lfloor b\mu^{-1}t \rfloor\}$ is driven by an unusually small number of events $\{\hat S_n \leq t\}$ that occur. The latter is caused by the fact that the sequence $(\hat S_n)_{n\geq 1}$ stays
above its expected value. The behavior in the case $b\in(0,1)$ is thus dictated by the right distribution tail of
$\xi$, leading to the aforementioned trichotomy.

\begin{theorem}\label{thm: regular deviation}
Let $b\in (0,1)$. Assume that relation \eqref{eq:domain alpha} holds for some $\alpha>1$. Then
$$\lim_{t\to\infty}\frac{-\log \mmp\{\hat N(t)=\lfloor b\mu^{-1}t\rfloor \}}{t\log t}=\frac{(\alpha-1)(1-b)}{\mu},$$ where $\mu=\me [\xi]<\infty$.
\end{theorem}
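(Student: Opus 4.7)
The plan is to establish matching upper and lower bounds on $-\log\mmp\{\hat N(t) = m\}$ with $m := \lfloor b\mu^{-1}t\rfloor$, exploiting the representation $\hat N(t) = \sum_{n\geq 1} B_n$ as a sum of independent Bernoulli indicators $B_n := \1_{\{\hat S_n\leq t\}}$ with parameters $p_n := \mmp\{S_n\leq t\}$ and complements $q_n := 1 - p_n = \mmp\{S_n > t\}$. The central analytic input is the Heyde--A.~V.~Nagaev heavy-tail large deviation estimate: for regularly varying tails of index $\alpha > 1$, $\mmp\{S_n - n\mu > x\}\sim n\,\mmp\{\xi > x\}$ uniformly in the regime $x \geq cn$ for any fixed $c > 0$.

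For the \emph{lower bound} on the probability I would use the natural candidate
$$\mmp\{\hat N(t) = m\}\geq \prod_{n=1}^m p_n \prod_{n > m} q_n,$$
corresponding to the event that precisely the first $m$ walkers land in $[0,t]$ and every subsequent walker overshoots. The factor $\prod_{n\leq m}p_n$ is essentially $1$ because Nagaev's inequality gives $\sum_{n \leq m} q_n = O(m^2 t^{-\alpha}\ell(t)) = o(t)$, hence $-\sum_{n\leq m}\log p_n = o(t\log t)$. For the main piece $-\sum_{n>m}\log q_n$ I would split $n>m$ using a small $\eta>0$ and a cutoff $\delta_t$ satisfying $\delta_t=o(t)$ while keeping Nagaev uniform on $(m, t/\mu - \delta_t]$. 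In the dominant range $m < n \leq (1-\eta)t/\mu$ one has $-\log q_n = (\alpha-1)\log t\,(1+o(1))$ uniformly, giving $\tfrac{(1-\eta-b)(\alpha-1)}{\mu}\,t\log t\,(1+o(1))$. The near-mean range $(1-\eta)t/\mu < n \leq t/\mu - \delta_t$ contributes $O(\eta\, t\log t)$ via the same per-term bound; the transition window $|n - t/\mu| \leq \delta_t$ uses the uniform lower bound $q_n \geq \mmp\{\xi > t\}$ to yield $O(\delta_t\log t) = o(t\log t)$; and the tail $n > t/\mu + \delta_t$ is controlled by a standard Chernoff bound on $\me[\eee^{-s\xi}]$ to make $\sum p_n = O(1)$. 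Sending first $t\to\infty$ and then $\eta\to 0$ produces $-\log\mmp\{\hat N(t) = m\}\leq \tfrac{(\alpha-1)(1-b)}{\mu}\,t\log t\,(1+o(1))$.

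For the matching \emph{upper bound} I would fix $\epsilon\in(0,1-b)$, set $K := \lfloor (1-\epsilon)t/\mu\rfloor$, and observe the inclusion $\{\hat N(t) = m\}\subseteq \bigl\{\sum_{n\leq K}(1-B_n)\geq K - m\bigr\}$. Since $n\mapsto q_n$ is non-decreasing, the union bound over subsets $A\subset\{1,\ldots,K\}$ of size $K-m$ yields $\prod_{n\in A}q_n\leq q_K^{K-m}$, so
$$\mmp\{\hat N(t)=m\}\leq \binom{K}{K-m}\,q_K^{K-m} \leq \left(\frac{K\,\eee\,q_K}{K-m}\right)^{K-m}.$$
Plugging in the Nagaev asymptotic $q_K\sim (1-\epsilon)\epsilon^{-\alpha}\mu^{-1}\,t^{1-\alpha}\ell(t)$ produces $-\log\mmp\{\hat N(t) = m\}\geq \tfrac{(1-\epsilon-b)(\alpha-1)}{\mu}\,t\log t\,(1+o(1))$, and letting $\epsilon\to 0$ matches the lower bound.

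The main obstacle I anticipate is the transition regime $n\approx t/\mu$, where neither the heavy-tail Nagaev asymptotic nor the Cram\'er-type light-tail bound applies sharply, and $-\log q_n$ can be considerably smaller than the leading rate $(\alpha-1)\log t$. The saving feature is that this window is thin on the scale of $t$, so the crude per-term bound $-\log q_n \leq \alpha\log t + O(1)$ suffices; choosing $\delta_t$ to simultaneously stay above Nagaev's uniformity threshold and keep the window sublinear in $t$ is the key quantitative calibration that must be executed carefully to avoid a non-matching constant.
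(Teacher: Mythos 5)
Your overall strategy is sound and largely parallels the paper's: both reduce the problem to estimating a product $\prod_{n\leq m}p_n\prod_{n>m}q_n$ (where $p_n=\mmp\{S_n\leq t\}$, $q_n=1-p_n$, $m=\lfloor b\mu^{-1}t\rfloor$) and then invoke a uniform heavy-tail large-deviation asymptotic ($\mmp\{S_n-\mu n>x\}\sim n\,\mmp\{\xi>x\}$ uniformly for $x\geq\delta n$; the paper cites Cline--Hsing, you cite Nagaev, but it is the same input). Your upper bound on $\mmp\{\hat N(t)=m\}$, however, is a genuinely different route: you use the inclusion $\{\hat N(t)=m\}\subseteq\{\sum_{n\leq K}(1-B_n)\geq K-m\}$ with $K=\lfloor(1-\epsilon)t/\mu\rfloor$, a union bound over $(K-m)$-subsets, and the monotonicity of $q_n$ in $n$. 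The paper instead proves a two-sided combinatorial reduction (Lemma~\ref{lem:shirai}, the Shirai-style argument) showing that $\log\mmp\{\hat N(t)=m\}$ equals $\log$ of the single-configuration product up to $O(t)$ error; that lemma is more work but is reused across Theorems~\ref{thm: regular deviation}--\ref{thm: light deviation2}, whereas your binomial bound is lighter and tailored to exactly what Theorem~\ref{thm: regular deviation} needs. Both handle the binomial factor $\binom{K}{K-m}\leq 2^K=\eee^{O(t)}$ harmlessly on the $t\log t$ scale.

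There is, however, a concrete gap in your tail control. You introduce a cutoff $\delta_t=o(t)$ and claim that the tail sum $\sum_{n>t/\mu+\delta_t}p_n=O(1)$ by ``a standard Chernoff bound on $\me[\eee^{-s\xi}]$.'' As stated this is not justified: for $n$ in an $o(t)$-window above $t/\mu$, one has $t/n\to\mu$ and so $I^\ast(t/n)\to 0$, hence $nI^\ast(t/n)$ may not be large, and $p_n$ need not even be bounded away from $1$ unless $\delta_t$ exceeds the fluctuation scale of $\nu(t)$ (which for $\alpha\in(1,2)$, where ${\rm Var}\,\xi=\infty$, requires an analysis of the behavior of $I^\ast$ near $\mu$ and/or of the truncated walk). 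There is a second, related imprecision: you also want the Nagaev asymptotic to hold uniformly on $(m,t/\mu-\delta_t]$, but its uniformity threshold requires $t-\mu n\geq cn$ for a fixed $c>0$, which fails when $\delta_t=o(t)$; the uniform range ends at $(1-\eta)t/\mu$ for fixed $\eta>0$, as your dominant-range analysis correctly assumes, so the ``same per-term bound'' on $((1-\eta)t/\mu,t/\mu-\delta_t]$ must in fact be the crude $q_n\geq\mmp\{\xi>t\}$. Both issues disappear if you drop the $o(t)$-cutoff and instead take $\delta_t=\eta t$: the near-mean window $|n-t/\mu|\leq\eta t$ contributes $O(\eta t\log t)$ via $q_n\geq\mmp\{\xi>t\}$, which vanishes after letting $\eta\to 0$; and on the tail $n>(1+\eta)t/\mu$ one has $p_n\leq 1/2$ for large $t$ by the weak law of large numbers for $\nu(t)$, whence $-\sum_{n>(1+\eta)t/\mu}\log q_n\leq 2\sum_n p_n=2U(t)=O(t)$ by the elementary renewal theorem, no Chernoff bound needed. (The paper avoids the issue entirely by proving the clean estimate $-\log\prod_{n\geq\lfloor\mu^{-1}t\rfloor+1}q_n=O(t)$ in Lemma~\ref{lem:appr}, via truncation to the finite-variance case.) Everything else in your sketch --- the $\sum_{n\leq m}q_n=o(t)$ estimate, the treatment of the dominant range, the plug-in of $q_K\sim(1-\epsilon)\epsilon^{-\alpha}\mu^{-1}t^{1-\alpha}\ell(t)$ in the upper bound --- is correct.
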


\begin{theorem}\label{thm:semi}
Let $b\in (0,1)$. Assume that, for some $\alpha\in (0,1)$ and some $\ell$ slowly varying at $\infty$, $H(t):=-\log \mmp\{\xi>t\}=t^\alpha \ell(t)$ for $t>0$ and that
\begin{equation}\label{eq:borov}
H(t+v(t))-H(t)=\alpha v(t)t^{-1}H(t)+o(v(t)t^{-1}H(t))+o(1),\quad t\to\infty
\end{equation}
for any function $v$ satisfying $v(t)=o(t)$ as $t\to\infty$. Then $$\lim_{t\to\infty} \frac{-\log \mmp\{\hat N(t)=\lfloor b\mu^{-1}t\rfloor \}}{t^{\alpha+1}\ell(t)}=\frac{(1-b)^{\alpha+1}}{\mu (\alpha+1)},$$ where $\mu=\me[\xi]<\infty$.
\end{theorem}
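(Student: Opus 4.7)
Let $k := \lfloor b\mu^{-1}t\rfloor$, $p_n(t) := \mmp\{S_n \leq t\}$, and $q_n(t) := \mmp\{S_n > t\}$. Since $\hat N(t) = \sum_{n \geq 1}\1_{\{\hat S_n \leq t\}}$ is a sum of independent Bernoulli variables, the Laplace transform factorizes as
\[\me[\eee^{-s\hat N(t)}] = \prod_{n \geq 1}\bigl(\eee^{-s} + (1-\eee^{-s})q_n(t)\bigr), \quad s \geq 0.\]
The plan is to prove matching upper and lower bounds on $\mmp\{\hat N(t) = k\}$, both of logarithmic order $-\tfrac{(1-b)^{\alpha+1}}{\mu(\alpha+1)}\,tH(t)$, where $tH(t) = t^{\alpha+1}\ell(t)$. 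The common technical backbone is the Borovkov--Nagaev single-big-jump asymptotics
\[\mmp\{S_n > t\} \sim n\,\eee^{-H(t-n\mu)},\]
valid uniformly over the transition window in $n$ with $t-n\mu \to \infty$; condition \eqref{eq:borov} is tailored to provide this uniformity.

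\textbf{Upper bound on the probability (Chernoff).}
I would apply $\mmp\{\hat N(t)=k\} \leq \eee^{sk}\me[\eee^{-s\hat N(t)}]$ with $s = \theta H(t)$, $\theta>0$, to be optimized. By regular variation of $H$, the transition $q_n(t) \asymp \eee^{-s}$ sits at $t-n\mu \asymp \theta^{1/\alpha}t$. Splitting $\log \me[\eee^{-s\hat N(t)}]$ at this threshold: indices with $t-n\mu > \theta^{1/\alpha}t$ (about $(1-\theta^{1/\alpha})t/\mu$ of them) each contribute $\sim -s$, while indices with $0<t-n\mu \leq \theta^{1/\alpha}t$ contribute $\sim \log q_n(t) \sim -H(t-n\mu)$. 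Converting the latter sum to $\mu^{-1}\int_0^{\theta^{1/\alpha}t}H(m)\,{\rm d}m$ and applying Karamata's theorem $\int_0^M H(m)\,{\rm d}m \sim MH(M)/(\alpha+1)$ together with $H(\theta^{1/\alpha}t) \sim \theta H(t)$ yields
\[\log\me[\eee^{-s\hat N(t)}] \sim -\frac{tH(t)}{\mu}\Bigl(\theta - \frac{\alpha}{\alpha+1}\theta^{(\alpha+1)/\alpha}\Bigr).\]
Adding $sk \sim b\theta tH(t)/\mu$ and minimizing in $\theta>0$ gives $\theta_\ast = (1-b)^\alpha$ and the optimal value $-(1-b)^{\alpha+1}/(\mu(\alpha+1))$, so that $\liminf_{t\to\infty}\bigl(-\log\mmp\{\hat N(t)=k\}\bigr)/(tH(t)) \geq (1-b)^{\alpha+1}/(\mu(\alpha+1))$.

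\textbf{Lower bound on the probability (single configuration).}
I would bound from below by the probability of the specific event $\{\hat S_n \leq t,\ n \leq k;\ \hat S_n > t,\ n > k\}$, which by independence gives
\[\mmp\{\hat N(t) = k\} \geq \prod_{n=1}^k p_n(t) \cdot \prod_{n > k} q_n(t).\]
For $n \leq k$, single-big-jump yields $q_n(t) \leq n\,\eee^{-H((1-b)t)(1-o(1))}$, and summing shows $\sum_{n\leq k}(-\log p_n(t)) = o(tH(t))$. The dominant contribution to $\sum_{n > k}\log q_n(t)$ comes from the range $k < n \leq t/\mu - K\sqrt{t}$:
\[\sum_{k < n \leq t/\mu - K\sqrt{t}}\log q_n(t) \sim -\frac{1}{\mu}\int_{K\sqrt{t}}^{(1-b)t} H(m)\,{\rm d}m \sim -\frac{(1-b)^{\alpha+1}tH(t)}{\mu(\alpha+1)}\]
by single-big-jump and Karamata. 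The Gaussian window $|n-t/\mu| \leq K\sqrt{t}/\mu$ contains $O(\sqrt{t})$ indices with bounded $\log q_n(t)$; the tail $n > t/\mu + K\sqrt{t}/\mu$ is handled via $p_n(t) \leq \exp(-c(n\mu-t)^2/n)$, obtained from the Laplace transform $\me[\eee^{-s\xi}] = 1-s\mu+O(s^2)$ and Chebyshev, giving $\sum_{n > t/\mu + K\sqrt{t}/\mu}p_n(t) = O(\sqrt{t})$. Both boundary contributions to $\sum \log q_n(t)$ are $O(\sqrt{t}) = o(tH(t))$, so $\limsup_{t\to\infty}\bigl(-\log\mmp\{\hat N(t)=k\}\bigr)/(tH(t)) \leq (1-b)^{\alpha+1}/(\mu(\alpha+1))$.

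\textbf{Main obstacle.}
The core technical step is the uniform validity of the single-big-jump asymptotics $\mmp\{S_n > t\} \sim n\,\eee^{-H(t-n\mu)}$ across the entire transition window, where $t-n\mu$ ranges from slowly growing functions of $t$ up to a fixed fraction of $t$. Condition \eqref{eq:borov} is precisely designed to supply this, providing a quasi-homogeneity of $H$ under shifts $v = o(t)$; this is the classical Borovkov framework for semi-exponential large deviations of sums. Once the uniformity is in hand, what remains is careful bookkeeping to show that the boundary contributions from the Gaussian window $|n - t/\mu| = O(\sqrt{t})$ and the Chernoff-type tail $n \gg t/\mu$ are indeed $o(tH(t))$ on the logarithmic scale.
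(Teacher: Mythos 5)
Your proposal is correct in outline and, for the upper bound, takes a genuinely different route from the paper. The paper's proof proceeds through a purely combinatorial reduction (Lemma~\ref{lem:shirai}): by bounding the number of configurations of the set $\{n:\hat S_n\le t\}$, it shows that $\log\mmp\{\hat N(t)=m\}$ differs from the single-configuration quantity $\log\bigl(\prod_{n\le m}p_n(t)\prod_{j>m}q_j(t)\bigr)$ by only $O(t)$, for any $b>0$, $b\ne 1$, with $\mu<\infty$. Since $tH(t)=t^{\alpha+1}\ell(t)\gg t$, this reduces Theorem~\ref{thm:semi} to estimating $-\log\prod_{n=\lfloor b\mu^{-1}t\rfloor}^{\lfloor\mu^{-1}t\rfloor}q_n(t)$, which is then handled by Theorem~2.1 of Borovkov--Mogulskii and Karamata's theorem. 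Your lower bound on the probability is precisely the trivial direction of that reduction (keeping a single configuration) followed by the same Borovkov--Mogulskii/Karamata estimate, so that half coincides with the paper. Where you diverge is the upper bound: you tilt exponentially, writing $\mmp\{\hat N(t)=k\}\le\eee^{sk}\me[\eee^{-s\hat N(t)}]$ with $s=\theta H(t)$, split the factorized log-Laplace transform at the threshold $t-n\mu\asymp\theta^{1/\alpha}t$, and optimize over $\theta$, landing on $\theta_\ast=(1-b)^\alpha$ and the correct constant $(1-b)^{\alpha+1}/(\mu(\alpha+1))$. I checked the algebra and it is right. This Chernoff route is more in the spirit of the paper's proof of Theorem~\ref{thm:infinite deviation} (the $\mu=\infty$ case), which also relies on an exponential change of measure, whereas the paper deliberately switches to the combinatorial Lemma~\ref{lem:shirai} for all finite-mean cases so that one lemma serves Theorems~\ref{thm: regular deviation}--\ref{thm: light deviation2} simultaneously. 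What you gain is a self-contained argument avoiding the binomial/multinomial bookkeeping of Lemma~\ref{lem:shirai}; what you lose is reusability across the other theorems, and making the transition-window splitting fully rigorous (the summands near $H(t-n\mu)\approx\theta H(t)$ are neither $\sim-s$ nor $\sim\log q_n$) still requires an $\varepsilon$-slack argument that you only sketch, much as you acknowledge. One remark: your Chebyshev control of the tail $n>t/\mu+K\sqrt{t}/\mu$ invokes $\me[\eee^{-s\xi}]=1-s\mu+O(s^2)$, i.e., $\me[\xi^2]<\infty$; this is in fact automatic here since the stretched-exponential tail $\eee^{-t^\alpha\ell(t)}$ implies finite moments of all orders, but it is worth noting that the paper's Lemma~\ref{lem:appr} achieves the same $O(t)$ boundary estimate from $\mu<\infty$ alone via a truncation argument, which is more robust.
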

\begin{remark}
Condition~\eqref{eq:borov} is not as restrictive as it appears. Some sufficient conditions for~\eqref{eq:borov} can be found on p.~991 in \cite{Borovkov+Mogulskii:2006}. For example, if $\ell$ is eventually differentiable and $\ell'(t)=o(t^{-1}\ell(t))$ as $t\to\infty$, then~\eqref{eq:borov} holds.
\end{remark}

Let $\Lambda$ be the moment generating function of $\xi$, that is, $\Lambda(s) = \me [\eee^{s\xi}]$ for $s\in\mr$. Plainly, $\Lambda(s)<\infty$ for all $s\leq 0$. Assume that $\Lambda(s)<\infty$ for some $s>0$, which particularly entails that $\mu=\me [\xi]<\infty$. It is known, see Lemma 2.2.5 in~\cite{Dembo+Zeitouni:2009}, that in this case
the Legendre transform $I$ of the convex function $s\mapsto \log \Lambda(s)$ satisfies
$$
I(x)=\sup_{s\geq 0}\,(sx-\log \Lambda(s))=\sup_{s\in D}\,(sx-\log \Lambda(s)),\quad x\geq \mu,
$$
where $D:=\{s\geq 0\,:\,\Lambda(s)<\infty\}$. Put $m(s) = \Lambda^\prime(s)/\Lambda(s)$ for $s\in D$ and 
$B:=\sup D$. The function $m$ is strictly increasing and continuous on $(0,B)$, and the limit $A_0 = \lim_{s\to B-} m(s)$ may be infinite (see Lemma on p.~287 in~\cite{Petrov:1965}).

Recall that the distribution of $\xi$ is nonlattice if it is not concentrated on any lattice $(s_1k+s_2)_{k\geq 0}$ with $s_1>0$ and $s_2\geq 0$.

\begin{theorem}\label{thm: light deviation}
Let $b\in (0,1)$. Assume that $\mmp\{\xi\leq x\}<1$ for all $x\geq0$ and $\me [\eee^{s\xi}]<\infty$ for some $s>0$. Then
\begin{equation}\label{thm: light deviation_s_claim}
\lim_{t\to\infty}\frac{-\log \mmp\{\hat N(t)=\lfloor b\mu^{-1}t\rfloor \}}{t^2}=\int_{b\mu^{-1}}^{\mu^{-1}}yI(1/y){\rm d}y<\infty,
\end{equation}
where $\mu=\me [\xi]<\infty$.

Under the additional assumptions that the distribution of $\xi$ is nonlattice and $\mu/b<A_0$,
\begin{equation}\label{thm: light deviation_d_claim}
-\log \mmp\{\hat N(t)=\lfloor b\mu^{-1}t\rfloor \}=t^2 \cdot \int_{b\mu^{-1}}^{\mu^{-1}}yI(1/y){\rm d}y + \frac{1-b}{2\mu}t \log t + O(t),\quad t\to \infty.
\end{equation}
\end{theorem}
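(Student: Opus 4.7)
The starting point is the representation
\begin{equation*}
\hat N(t)=\sum_{n\ge 1} B_n(t),\qquad B_n(t):=\1_{\{\hat S_n\le t\}},
\end{equation*}
where $B_1(t),B_2(t),\ldots$ are independent Bernoulli variables of parameter $p_n(t):=\mmp\{S_n\le t\}$, and the map $n\mapsto p_n(t)$ is strictly decreasing since $\xi$ is nondegenerate and nonnegative. By the elementary renewal theorem $U(t)\sim \mu^{-1}t$, hence $k=k(t):=\lfloor b U(t)\rfloor=b\mu^{-1}t+O(1)<U(t)$, placing us in the lower-deviation regime.

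For the lower bound, retain only the most likely realization:
\begin{equation*}
\mmp\{\hat N(t)=k\}\ge \prod_{n=1}^{k} p_n(t)\prod_{n>k}(1-p_n(t)).
\end{equation*}
Taking logarithms, three ranges contribute separately. For $n\le k$ one has $t/n\ge \mu/b$, so Cram\'er's bound $\mmp\{S_n>t\}\le e^{-nI(t/n)}\le e^{-nI(\mu/b)}$ is geometrically summable, giving $\sum_{n\le k}\log p_n(t)=O(1)$. For $n>t/\mu$ a Gaussian estimate $\mmp\{S_n\le t\}\le \exp(-c(t-n\mu)^2/n)$ yields $\sum_{n>t/\mu}\log(1-p_n(t))=O(\sqrt t)$. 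The bulk contribution comes from $k<n\le t/\mu$, where $t/n$ lies in the compact $[\mu,\mu/b]\subset(\mu,A_0]$; the nonlattice hypothesis together with $\mu/b<A_0$ makes the Bahadur--Rao expansion
\begin{equation*}
\log\mmp\{S_n>t\} = -nI(t/n) - \tfrac12\log n + O(1)
\end{equation*}
uniform in this range. Approximating the sum by an integral and substituting $y=n/t$ converts $\int_k^{t/\mu}nI(t/n)\,dn$ into $t^2\int_{b\mu^{-1}}^{\mu^{-1}} y\,I(1/y)\,dy$, while $\sum_{k<n\le t/\mu}\tfrac12\log n=\tfrac{1-b}{2\mu}\,t\log t+O(t)$.

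For the matching upper bound, combine $\mmp\{\hat N(t)=k\}\le\mmp\{\hat N(t)\le k\}$ with the Chernoff inequality
\begin{equation*}
\mmp\{\hat N(t)\le k\}\le e^{sk}\prod_{n\ge 1}\bigl((1-p_n(t))+p_n(t)e^{-s}\bigr),\qquad s\ge 0,
\end{equation*}
at the saddle $s=s^*\approx kI(t/k)=(b/\mu)I(\mu/b)\,t$, chosen so the exponentially tilted mean of $\hat N(t)$ equals $k$. Splitting the product at $n=k$: the factors with $n<k$ each equal $e^{-s^*}$ up to a multiplicative $1+O(e^{s^*-nI(t/n)})$, whose logarithms sum to $-ks^*+O(1)$ and cancel $s^*k$ exactly; the factors with $k<n\le t/\mu$ equal $1-p_n(t)$ up to $1+O(e^{-s^*}/(1-p_n(t)))$, contributing the same bulk integral as in the lower bound modulo $O(1)$ per index; the factors with $n>t/\mu$ give $O(\sqrt t)$. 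Thus the upper bound agrees with the lower bound modulo $O(t)$, proving~\eqref{thm: light deviation_d_claim}.

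The principal technical challenge is to establish the uniform Bahadur--Rao expansion as $t/n$ ranges over $[\mu,\mu/b]$, which is precisely what nonlatticeness and $\mu/b<A_0$ secure, and to provide a separate Edgeworth-type treatment of the transition zone of width $O(\sqrt t)$ near $n=t/\mu$, where $t/n\to\mu$ and the Bahadur--Rao prefactor degenerates. For the coarser claim~\eqref{thm: light deviation_s_claim} only the logarithmic Cram\'er asymptotic $\log\mmp\{S_n>t\}\sim -nI(t/n)$ is needed (together with a uniform version on compacts of $(\mu,\infty)$), so no hypothesis beyond $\mmp\{\xi\le x\}<1$ for all $x\ge 0$ and $\me[e^{s\xi}]<\infty$ for some $s>0$ enters.
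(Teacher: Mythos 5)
Your outline is sound, and your Chernoff-tilt upper bound $\mmp\{\hat N(t)\le k\}\le e^{s^*k}\prod_{n\ge1}\big((1-p_n(t))+p_n(t)e^{-s^*}\big)$ is a genuinely different and arguably cleaner route than the paper, which instead proves a combinatorial lemma (Lemma~\ref{lem:shirai}) showing $\log\mmp\{\hat N(t)=\lfloor b\mu^{-1}t\rfloor\}$ equals, up to $O(t)$, the logarithm of the probability of the \emph{single} most likely configuration $\prod_{n\le m}p_n(t)\prod_{n>m}(1-p_n(t))$. Both deliver the same reduction: the problem becomes computing $\sum_{\lfloor b\mu^{-1}t\rfloor<n\le\lfloor\mu^{-1}t\rfloor}(-\log\mmp\{S_n>t\})$ up to $O(t)$. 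Your treatment of the flanking ranges $n\le k$ and $n>t/\mu$ is also consistent with the paper's Lemma~\ref{lem:appr} (and your $O(\sqrt t)$ for $n>t/\mu$ is sharper than needed, though it would require the tail estimate for $\mmp\{S_n\le t\}$ to be checked).

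There is, however, a genuine gap in the core computation. You assert that nonlatticeness plus $\mu/b<A_0$ ``secure'' a \emph{uniform} Bahadur--Rao expansion for $t/n$ ranging over $[\mu,\mu/b]$, with ``a separate Edgeworth-type treatment of the transition zone of width $O(\sqrt t)$ near $n=t/\mu$.'' This misrepresents where the difficulty lies. Petrov's uniform version of Bahadur--Rao holds only for $t/n$ in compact subsets of the \emph{open} interval $(\mu,A_0)$; the prefactor $\alpha_n\nu_n$ degenerates as $t/n\downarrow\mu$, i.e.\ as $n\uparrow t/\mu$. Consequently the expansion $-\log\mmp\{S_n>t\}=nI(t/n)+\tfrac12\log n+O(1)$ is available uniformly only up to $n\le t/(\mu+\varepsilon)$ for any fixed $\varepsilon>0$, and the remaining zone $n\in(t/(\mu+\varepsilon),t/\mu]$ has width proportional to $t$, not $O(\sqrt t)$. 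Its contribution to both the $t^2$-order and the $t\log t$-order terms must be computed by a different tool (the paper uses Cram\'er's series, Theorem~10 of Petrov~1975, with a further subdivision at $t/\mu-t^{1/2+\delta}$), and one must then verify that the $\varepsilon$-dependent pieces from the two regimes recombine into $\int_{b/\mu}^{1/\mu}yI(1/y)\,\mathrm{d}y$ and $\tfrac{1-b}{2\mu}$. This moderate-deviations computation is the bulk of the paper's proof of \eqref{thm: light deviation_d_claim} (the $I_1$, $I_2$, $I_{2,1}$, $II$ estimates) and is not carried out in your sketch; invoking an unspecified ``Edgeworth-type treatment'' does not discharge it, especially because the claimed $O(\sqrt t)$ width of the problematic zone is incorrect.
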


In the `universal' case $b>1$, we shall need a counterpart of the function $I$ derived from the distribution of $-\xi$. In this case, without any assumptions on exponential moments of $\xi$, the Legendre transform $\tilde{I}$ of $s\mapsto \log\mathbb{E}\big[\eee^{-s\xi}\big]$ satisfies
$$
\tilde{I}(x)=\sup_{s\geq 0}\,(sx-\log \Lambda(-s)),\quad x\geq -\mu,
$$
see again Lemma 2.2.5 in~\cite{Dembo+Zeitouni:2009}. Put
$$
I^{\ast}(x)=\tilde{I}(-x)=\sup_{s\geq 0}\,(-sx-\log \Lambda(-s))=\sup_{s\leq 0}\,(sx-\log \Lambda(s)),\quad x\leq \mu.
$$

\begin{theorem}\label{thm: light deviation2}
Let $b>1$. Assume that $\mmp\{\xi\leq x\}<1$ for all $x\geq 0$, $\mmp\{\xi\leq b^{-1}\mu\}>0$ and $\mu =\me[\xi]<\infty$. Then 
\begin{equation}\label{thm: light deviation2_s_claim}
\lim_{t\to\infty}\frac{-\log \mmp\{\hat N(t)=\lfloor b\mu^{-1}t\rfloor \}}{t^2}=\int_{\mu^{-1}}^{b\mu^{-1}}yI^\ast(1/y){\rm d}y<\infty.
\end{equation}
Under the additional assumptions that the distribution of $\xi$ is nonlattice,
\begin{equation}\label{thm: light deviation2_d_claim}
-\log \mmp\{\hat N(t)=\lfloor b\mu^{-1}t\rfloor \}=t^2 \cdot \int_{\mu^{-1}}^{b\mu^{-1}}yI^\ast(1/y){\rm d}y + \frac{b-1}{2\mu}t \log t + O(t), \quad t\to \infty.
\end{equation}
\end{theorem}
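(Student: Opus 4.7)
The plan is to follow the template of the proof of Theorem~\ref{thm: light deviation}, reorienting it to the regime $b>1$, in which the event $\{\hat N(t) = \lfloor b\mu^{-1}t\rfloor\}$ demands that many copies $\hat S_n$ with $n > t/\mu$ fall below their typical value $n\mu$. This is a lower-tail large deviation for $S_n$, now governed by the Legendre transform $I^\ast$ in place of $I$. The first step is the representation coming from independence: $\hat N(t) = \sum_{n\geq 1}\eta_n(t)$, where $\eta_n(t) := \1_{\{\hat S_n\leq t\}}$ are independent Bernoulli variables with parameters $p_n(t) := \mmp\{S_n \leq t\}$, whence
$$
\mmp\{\hat N(t) = k\} = \sum_{J\subset\mn,\,|J|=k}\prod_{n\in J} p_n(t)\prod_{n\notin J}(1-p_n(t)),
$$
with $k = \lfloor b\mu^{-1}t\rfloor$. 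Since $n\mapsto p_n(t)/(1-p_n(t))$ is nonincreasing, the summand is maximized at $J^\ast = \{1,\ldots,k\}$, and the dominant mass is carried by subsets close to $J^\ast$.

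For the lower bound on the probability (equivalently, the upper bound on $-\log$), retain only the $J = J^\ast$ term,
$$
\mmp\{\hat N(t)=k\} \geq \prod_{n=1}^{k} p_n(t)\prod_{n>k}(1-p_n(t)),
$$
and split the index range into four zones: (a) the pre-deviation range $n\leq (1-\delta)t/\mu$, where Cramér's upper-tail bound gives $1-p_n(t) = O(\eee^{-c n})$ so that $-\sum\log p_n(t) = O(1)$; (b) the transition strip $n\in[(1-\delta)t/\mu,(1+\delta)t/\mu]$, in which $p_n(t)$ is bounded away from $0$ and $1$ and the contribution is $O(\delta t)$; (c) the deviation zone $n\in[(1+\delta)t/\mu,k]$, where Cramér's lower-tail theorem yields $-\log p_n(t) = n I^\ast(t/n) + o(n)$ uniformly (finite because $\mmp\{\xi\leq \mu/b\}>0$ forces $I^\ast(\mu/b)<\infty$); and (d) the tail $n>k$, where $-\log(1-p_n(t)) \leq 2p_n(t)$ decays geometrically and sums to $O(1)$. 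A Riemann-sum approximation with the substitution $y = n/t$ converts $\sum n I^\ast(t/n)$ over zone (c) into $t^2\int_{\mu^{-1}}^{b\mu^{-1}} y I^\ast(1/y)\,{\rm d}y + o(t^2)$ after sending $\delta\to 0$.

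For the matching upper bound on the probability, use $\mmp\{\hat N(t)=k\}\leq \mmp\{\hat N(t)\geq k\}$ together with Chernoff's exponential inequality: for $s>0$,
$$
-\log\mmp\{\hat N(t)\geq k\} \geq sk - \sum_{n\geq 1}\log\bigl(1+(\eee^s-1)p_n(t)\bigr).
$$
Optimization amounts to choosing a tilt $s^\ast = s^\ast(t)$ under which the tilted parameters $\tilde p_n = \eee^{s^\ast}p_n(t)/(1+(\eee^{s^\ast}-1)p_n(t))$ have total mean $k$. Asymptotically $\tilde p_n\approx 1$ for $n<k$ and $\tilde p_n\approx 0$ for $n>k$, and evaluation of the Legendre dual by splitting into the saturated and sparse regimes reproduces the leading quantity $t^2\int_{\mu^{-1}}^{b\mu^{-1}} y I^\ast(1/y)\,{\rm d}y$, proving~\eqref{thm: light deviation2_s_claim}.

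For~\eqref{thm: light deviation2_d_claim} the leading Cramér bound must be sharpened to the Bahadur--Rao--Petrov expansion, valid uniformly for $t/n$ in compact subsets of $(0,\mu)$ under the nonlattice hypothesis:
$$
-\log\mmp\{S_n\leq t\} = n I^\ast(t/n) + \tfrac{1}{2}\log n + c(t/n) + o(1),
$$
where $c(\cdot)$ is a continuous function coming from the Esscher tilt. Summing the $\tfrac{1}{2}\log n$ term for $n\in[t/\mu,k]$ gives precisely $\tfrac{b-1}{2\mu}\,t\log t + O(t)$, while the remaining corrections aggregate to $O(t)$ by Riemann approximation. For the matching refined upper bound on the probability, one establishes a local limit theorem for $\hat N(t)$ under the optimal exponential tilt, contributing only an $O(\log t)$ local factor that is absorbed into $O(t)$. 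The main obstacle is twofold: securing the Bahadur--Rao expansion uniformly across the deviation range (notably the boundary $n\approx t/\mu$, where $I^\ast$ vanishes and the expansion degenerates, and the boundary $n\approx k$, where $t/n$ approaches $\mu/b$), and establishing the local central limit theorem for a sum of independent nonidentically distributed Bernoullis with exponentially tilted parameters uniformly in $t$, which is exactly where the nonlattice hypothesis becomes indispensable.
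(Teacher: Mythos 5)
Your proposal takes a genuinely different route from the paper in one key respect: you propose to bound $\mmp\{\hat N(t)=k\}$ from above via the Chernoff inequality $\mmp\{\hat N(t)\ge k\}\le \exp(-\sup_s(sk-\psi_t(s)))$ with $\psi_t(s)=\sum_n\log(1+(\eee^s-1)p_n(t))$, whereas the paper establishes the combinatorial Lemma~\ref{lem:shirai} showing that the entire sum $\sum_{|J|=k}\prod_{n\in J}p_n\prod_{n\notin J}(1-p_n)$ differs from the single term $J^\ast=\{1,\ldots,k\}$ by at most $\eee^{O(t)}$, then reduces both parts of the theorem to asymptotics of $-\log\prod_{n=\lfloor\mu^{-1}t\rfloor}^{\lfloor b\mu^{-1}t\rfloor}\mmp\{S_n\le t\}$. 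The lower bound on the probability via $J^\ast$ is the same in both treatments, and both then invoke precise large-deviation expansions applied to the left tail of $S_n$, i.e.\ to the random walk $(-S_n)$. The Chernoff route is attractive because, as you note, the $J^\ast$ term is recovered from the tilted cumulant generating function, but making it match to $O(t)$ precision requires a detailed evaluation of $\psi_t(s^\ast)$ at the implicit optimizer $s^\ast$ and a tilted local CLT; the paper's combinatorial lemma circumvents this entirely.

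That said, there are two genuine gaps. First, in zone (a) you invoke Cramér's upper-tail bound to claim $1-p_n(t)=O(\eee^{-cn})$ and hence $-\sum_{n\le(1-\delta)t/\mu}\log p_n(t)=O(1)$. Theorem~\ref{thm: light deviation2} assumes only $\mu=\me[\xi]<\infty$ and makes no assumption on the right tail of $\xi$; if $\mmp\{\xi>t\}\sim t^{-\alpha}$ with $\alpha\in(1,2)$, then $\mmp\{S_n>t\}\sim n\mmp\{\xi>t\}$ and the sum is of order $t^{2-\alpha}$, which is not $O(1)$ but is $O(t)$. The correct statement, proved in Lemma~\ref{lem:appr}~\eqref{eq:appr2} by a left-tail (Jain--Pruitt type) argument, is that this block contributes $O(t)$, which still suffices. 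Second, you correctly flag but do not resolve the degeneration of the Bahadur--Rao expansion as $t/n\to\mu$ (where $I^\ast$ vanishes); the paper handles this boundary by splitting the range into a moderate-deviation zone treated with Cramér's series (Petrov's Theorem~10) and a linear-deviation zone treated with the Bahadur--Rao--Petrov theorem, and by a further Gaussian-zone split of size $t^{1/2+\delta}$. Without an explicit treatment of this splitting, the $\tfrac{1}{2}\log n$ bookkeeping does not close, and the claimed $\tfrac{b-1}{2\mu}t\log t$ term remains unjustified. These two points are where a complete write-up following your route would need the most work.
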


\begin{example}
Put $\varphi(s):=\eee^{-s^{1/2}}(1+s^{1/2})$ for $s\geq 0$. Since $-\varphi^\prime(s)=\eee^{-s^{1/2}}/2$, we infer that $\varphi$ is completely monotone. Hence, $\varphi(s)=\me [\eee^{-s\xi}]=\Lambda(-s)$ for some random variable $\xi>0$. Further, $\mu=\me [\xi]=-\varphi'(0)=1/2$, and an application of Theorem 8.1.6 in~\cite{Bingham+Goldie+Teugels:1989} yields $\mmp\{\xi>t\}\sim (6\pi^{1/2})^{-1}t^{-3/2}$ as $t\to\infty$. Thus, by Theorem~\ref{thm: regular deviation}, for $b\in(0,1)$, $$\lim_{t\to\infty}\frac{-\log \mmp\{\hat N(t)=\lfloor 2b t\rfloor \}}{t\log t}=1-b.$$ On the other hand, it can be checked that $I^\ast(x)=1/(4x)-x+\log (2x)$ for $0< x\leq 1/2$ and $$\int_{2}^{2b} yI^\ast(1/y){\rm d}y=\frac{2}{3}b^3-2b^2\log b+b^2-2b+\frac{1}{3},\quad b>1.$$ Hence, by Theorem \ref{thm: light deviation2}, for $b>1$,
$$-\log \mmp\{\hat N(t)=\lfloor 2 bt\rfloor\}=\Big(\frac{2}{3}b^3-2b^2\log b+b^2-2b+\frac{1}{3}\Big)t^2+(b-1)t\log t+O(t),\quad t\to\infty.$$
\end{example}
\begin{example}
Assume that $\xi$ has the gamma distribution with parameter $2/\rho$ and $1$, see formula~\eqref{eq:gamma}. Then $\mu=2/\rho$, $A_0=(2/\rho)\lim_{s\to 1-}(1-s)^{-1}=+\infty$, $I(x)=x-2/\rho+(2/\rho)\log (2/(\rho x))$ for $x\geq 2/\rho$ and $I^\ast$ is given by the same formula, but for $x\in (0,2/\rho)$. As a consequence, for $b\in (0,1)$, $$\int_{2b/\rho}^{2/\rho}yI(1/y){\rm d}y=\frac{\rho}{8}\big(2b^2\log (1/b)+(1-b)(1-3b)\big)$$ and, for $b>1$, $$\int_{2/\rho}^{2b/\rho}yI^\ast(1/y){\rm d}y=\frac{\rho}{8}\big(2b^2\log b-(b-1)(3b-1)\big).$$ Now Theorems~\ref{thm: light deviation} and \ref{thm: light deviation2} in combination with formula~\eqref{eq:principal} ensure that, for $b>0$, $b\neq 1$, $$-\log \mmp\{\Theta_\rho(D_t)=\lfloor b(\rho/2) t^\rho\rfloor\}=\frac{\rho}{8}\big|2b^2\log b-(b-1)(3b-1)\big|t^{2\rho}+\frac{|b-1|\rho^2}{4}t^\rho\log t+O(t^\rho)$$
as $t\to\infty$. In the case $\rho=2$, this extends Theorem 1.1 in~\cite{Shirai:2006}.
\end{example}

\subsection{Local limit theorem}

Recall the standing assumption that the distribution of $\xi$ is nondegenerate.
\begin{proposition}\label{thm:local limit}
The following asymptotic relations hold:
\begin{equation}\label{eq:diver}
\lim_{t\to\infty}\,{\rm Var}\,[\hat N(t)]=\infty
\end{equation}
and
\begin{equation}\label{eq:local}
\mmp\{\hat N(t)=\lfloor U(t)\rfloor\}~\sim~\frac{1}{(2\pi{\rm Var}[\hat N(t)])^{1/2}},\quad t\to\infty.
\end{equation}
\end{proposition}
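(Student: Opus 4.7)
The plan hinges on the fact that independence of the $(\hat S_n)_{n\geq 1}$'s makes the decoupled counting process a sum of independent Bernoulli variables:
$$
\hat N(t)=\sum_{n\geq 1}X_n(t),\qquad X_n(t):=\1_{\{\hat S_n\leq t\}}\sim\mathrm{Bernoulli}(p_n(t)),\qquad p_n(t):=\mmp\{S_n\leq t\},
$$
so that $\me[\hat N(t)]=U(t)$ and $\sigma^2(t):=\mathrm{Var}[\hat N(t)]=\sum_{n\geq 1}p_n(t)(1-p_n(t))$. The sequence $n\mapsto p_n(t)$ is nonincreasing, so $\sigma^2(t)$ is controlled by the width of its transition region from values close to $1$ down to $0$.

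To prove \eqref{eq:diver} I would argue by contradiction. Suppose $\sigma^2(t_k)\leq M$ along some $t_k\to\infty$. Since $p(1-p)\geq 3/16$ for $p\in[1/4,3/4]$, monotonicity furnishes indices $n_1(k)<n_2(k)$ with $n_2(k)-n_1(k)\leq W:=1+\lceil 16M/3\rceil$, $p_{n_1(k)}(t_k)\geq 3/4$ and $p_{n_2(k)}(t_k)\leq 1/4$. Because $p_n(t)\to 1$ as $t\to\infty$ for each fixed $n$, $n_1(k)\to\infty$. Fix $A>0$ with $\mmp\{S_W\leq A\}\geq 1/2$, so $\mmp\{S_m\leq A\}\geq 1/2$ for every $m\leq W$. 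Using the decomposition $S_{n_2(k)}=S_{n_1(k)}+S^\prime_{n_2(k)-n_1(k)}$ with independent tail sum,
$$
\tfrac14\geq p_{n_2(k)}(t_k)\geq\mmp\{S_{n_1(k)}\leq t_k-A\}\cdot\mmp\{S^\prime_{n_2(k)-n_1(k)}\leq A\}\geq\tfrac12\,\mmp\{S_{n_1(k)}\leq t_k-A\},
$$
so that $\mmp\{t_k-A<S_{n_1(k)}\leq t_k\}\geq 3/4-1/2=1/4$. This contradicts the Kolmogorov--Rogozin concentration inequality, which for the nondegenerate walk $(S_n)$ yields $\sup_{x\in\mr}\mmp\{S_n\in(x,x+A]\}\to 0$ as $n\to\infty$.

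Granted \eqref{eq:diver}, relation \eqref{eq:local} follows from the standard Fourier-inversion local limit theorem. Starting from
$$
\mmp\{\hat N(t)=k\}=\frac{1}{2\pi\sigma(t)}\int_{-\pi\sigma(t)}^{\pi\sigma(t)}\eee^{-iku/\sigma(t)}\phi_t(u/\sigma(t))\,{\rm d}u,\qquad \phi_t(\theta):=\prod_{n\geq 1}(1+p_n(t)(\eee^{i\theta}-1)),
$$
the crucial uniform bound is
$$
|\phi_t(\theta)|^2=\prod_{n\geq 1}(1-2p_n(1-p_n)(1-\cos\theta))\leq \exp\bigl(-2(1-\cos\theta)\sigma^2(t)\bigr)\leq\exp(-4\theta^2\sigma^2(t)/\pi^2)\quad(|\theta|\leq\pi),
$$
which, after the substitution, provides the $t$-independent integrable majorant $\exp(-2u^2/\pi^2)$ on $[-\pi\sigma(t),\pi\sigma(t)]$. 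Pointwise, the moment-based expansion $\log(1+p_n(\eee^{i\theta}-1))=ip_n\theta-\tfrac12 p_n(1-p_n)\theta^2+O(p_n(1-p_n)|\theta|^3)$ (valid for $|\theta|\leq 1$ and obtained from $|\me[(X_n(t)-p_n(t))^k]|\leq p_n(1-p_n)$ for $k\geq 2$) summed over $n$ gives $\log\phi_t(u/\sigma(t))=iU(t)u/\sigma(t)-u^2/2+O(|u|^3/\sigma(t))$, so that $\phi_t(u/\sigma(t))\eee^{-iU(t)u/\sigma(t)}\to\eee^{-u^2/2}$ for each fixed $u$; the extra phase $\eee^{-i(\lfloor U(t)\rfloor-U(t))u/\sigma(t)}$ tends to $1$ because $|\lfloor U(t)\rfloor-U(t)|\leq 1$ and $\sigma(t)\to\infty$. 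Dominated convergence then delivers $\mmp\{\hat N(t)=\lfloor U(t)\rfloor\}\sim\sigma(t)^{-1}(2\pi)^{-1}\int_\mr \eee^{-u^2/2}\,{\rm d}u=(2\pi\sigma^2(t))^{-1/2}$.

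The main obstacle is the unconditional proof of \eqref{eq:diver}: under the sole nondegeneracy assumption on $\xi$ no CLT-type scaling of $(S_n)$ is available, so the required absence of concentration of $S_n$ on bounded intervals must be extracted purely from the Kolmogorov--Rogozin bound. Once \eqref{eq:diver} is in hand, the Fourier step leading to \eqref{eq:local} is entirely routine.
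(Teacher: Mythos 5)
Your proof is correct, and both halves take a route that is genuinely different from the paper's in presentation, though they rest on the same underlying tools.

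For the divergence of the variance, the paper lower-bounds $\mathrm{Var}[\hat N(t)]=\sum_k\mmp\{\nu(t)>k\}\mmp\{\nu(t)\leq k\}$ directly around the median $m(t)$ of $\nu(t)$: picking a slowly growing window length $a(t)$ and decomposing $S_{m(t)+a(t)-1}$, it reduces the claim to showing $\limsup_t\mmp\{S_{m(t)+a(t)-1}>t\}<1$, which is established via Petrov's concentration-function bound (Theorem~9, p.~49 of \cite{Petrov:1975}) combined with dominated convergence. You instead argue by contradiction, observing that bounded variance along $t_k\to\infty$ forces the ``transition zone'' $\{n: p_n(t_k)\in[1/4,3/4]\}$ to have uniformly bounded width $W$; the factorization $S_{n_2}=S_{n_1}+S'_{n_2-n_1}$ then produces a bounded interval on which $S_{n_1(k)}$ carries mass at least $1/4$ with $n_1(k)\to\infty$, contradicting the Kolmogorov--Rogozin bound $Q(S_n;A)\to 0$. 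The two arguments use the same kind of anti-concentration input; yours exploits the monotonicity of $n\mapsto p_n(t)$ a bit more aggressively and is perhaps conceptually shorter, while the paper's direct estimate has the minor virtue of yielding a constructive lower bound rather than a pure existence statement.

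For the local limit theorem, the paper simply invokes Proposition~\ref{prop:Dolg} (Bender's local CLT for triangular arrays of independent Bernoulli variables), whereas you reprove it by Fourier inversion. Your key steps check out: the identity $|1+p(\eee^{i\theta}-1)|^2=1-2p(1-p)(1-\cos\theta)$, the $t$-uniform integrable majorant $\exp(-2u^2/\pi^2)$ via $1-\cos\theta\geq 2\theta^2/\pi^2$ on $[-\pi,\pi]$, and the pointwise cumulant expansion with error $O\bigl(p_n(1-p_n)|\theta|^3\bigr)$ (which is the crucial point, since summing gives an error $O(\sigma^2(t)|\theta|^3)=O(|u|^3/\sigma(t))$ rather than the useless $O(U(t)|\theta|^3)$). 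This is a self-contained alternative to citing \cite{Bender:1973}, at the cost of a longer argument. Both proofs are sound.
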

\begin{remark}
Observe that $${\rm Var}[\hat N(t)]=\sum_{n\geq 1}\mmp\{S_n\leq t\}\mmp\{S_n>t\}\leq U(t)=O(t),\quad t\to\infty,$$ where the last equality is a consequence of the elementary renewal theorem. This together with Proposition \ref{thm:local limit} justifies the claim made in Remark \ref{rem:infinite}: under the assumptions of Theorem \ref{thm:infinite deviation}, $$\lim_{t\to\infty}\frac{-\log \mmp\{\hat N(t)=\lfloor U(t)\rfloor\}}{U(t)}=0.$$
\end{remark}
\begin{remark}
Under the assumption that the distribution of $\xi$ belongs to the domain of attraction of an $\alpha$-stable distribution for $\alpha\in (1,2]$ the asymptotic behavior of ${\rm Var}\,[\hat N(t)]$ as $t\to\infty$ was found in Corollary 5.3 of \cite{Alsmeyer+Iksanov+Kabluchko:2025}. For instance, if $\sigma^2:={\rm Var}\,[\xi]\in (0,\infty)$, then $${\rm Var}\,[\hat N(t)]~\sim~\Big(\frac{\sigma^2 t}{\mu^3\pi}\Big)^{1/2},\quad t\to\infty,$$ where $\mu=\me [\xi]<\infty$. Suppose \eqref{eq:domain alpha} with $\alpha\in [0,1)$. One can show along similar lines that $${\rm Var}\,[\hat N(t)]~\sim~\frac{c_\alpha}{\mmp\{\xi>t\}}~\sim~\frac{c_\alpha t^\alpha}{\ell(t)},\quad t\to\infty,$$ where $c_\alpha:=\int_0^\infty\mmp\{W_\alpha^\leftarrow(1)>y\}\mmp\{W_\alpha^\leftarrow(1)\leq y\}{\rm d}y<\infty$ if $\alpha\in (0,1)$ and $c_0:=\int_0^\infty\mmp\{Z_0>y\}\mmp\{Z_0\leq y\}{\rm d}y=1/2$.

We stress that Proposition \ref{thm:local limit} holds under the sole (standing) assumption that the distribution of $\xi$ is nondegenerate. In this more general setting, the divergence of ${\rm Var}\,[\hat N(t)]$ requires a proof, which cannot appeal to any results available under the domain of attraction assumption.
\end{remark}

\section{Proofs}

\subsection{Proof of Theorem \ref{thm:infinite deviation}}

Relation \eqref{eq:ren asymp} is known. It follows by an application of Karamata's Tauberian theorem, see, for instance, formulae (8.6.1) and (8.6.3) on p.~361 in \cite{Bingham+Goldie+Teugels:1989}.

For each $n\in\mn$ and each $t>0$, put $\eta_n(t):=\1_{\{\hat S_n\leq t\}}$. For each $s\neq 0$ and each $t>0$, define a new probability measure $\mmp^{(s,\,t)}$ on the $\sigma$-algebra generated by $\hat S_1$, $\hat S_2,\ldots$ as follows:
\begin{equation}\label{eq:change of measure}
\me^{(s,\,t)}[g(\eta_1(t),\ldots, \eta_k(t))]:=\frac{\me \big[\eee^{s\hat N(t)}g(\eta_1(t),\ldots, \eta_k(t))\big]}{\me [\eee^{s\hat N(t)}]}
\end{equation}
for all $k\in\mn$ and all bounded measurable functions $g:\mr^k\to\mr$, where $\me^{(s,\,t)}$ is the expectation with respect to $\mmp^{(s,\,t)}$. For any bounded measurable function $h:\mr\to\mr$, put $g(x_1,\ldots, x_k):=h(x_1+\ldots+x_k)$ for $(x_1,\ldots, x_k)\in\mr^k$. Using \eqref{eq:change of measure} with the so defined $g$ and then sending $k\to\infty$ we arrive at
\begin{equation}\label{eq:change of measure2}
\me^{(s,\,t)}[h(\hat N(t))]=\frac{\me \big[\eee^{s\hat N(t)}h(\hat N(t))\big]}{\me [\eee^{s\hat N(t)}]}.
\end{equation}
By an approximation argument, equality \eqref{eq:change of measure2} holds for not necessarily bounded Borel functions $h:\mr\to\mr$ as long as the left- or right-hand side of the equality is well defined, possibly infinite. We stress that, thanks to the independence of $\hat S_1$, $\hat S_2,\ldots$, 
the random variables $\eta_1(t)$, $\eta_2(t),\ldots$ are still independent under $\mmp^{(s,\,t)}$. A specialization of \eqref{eq:change of measure2}, with $h(x) = \eee^{-sx} \1_{\{\lfloor b\,U(t)\rfloor\}}(x)$ (indicator function of the one-point set), yields
\begin{equation}\label{eq:change of measure3}
\mmp\{\hat N(t)=\lfloor b\,U(t)\rfloor\}=\eee^{-s\lfloor b\,U(t)\rfloor}
\me[\eee^{s\hat N(t)}]\mmp^{(s,\,t)}\{\hat N(t)=\lfloor b\,U(t)\rfloor\}.
\end{equation}

For each $n\in\mn$ and each $t>0$, put $p_n=p_n(t):=\mmp\{S_n\leq t\}$. Then $$\psi_t(s):=\log \me \big[\eee^{s\hat N(t)}\big]=\sum_{n\geq 1}\log ((\eee^s-1)p_n+1),\quad s\in\mr$$ and further $$\psi^\prime_t(s)=\sum_{n\geq 1}\frac{\eee^s p_n}{(\eee^s-1)p_n+1},\quad \psi^{\prime\prime}_t(s)=\sum_{n\geq 1}\frac{\eee^s p_n(1-p_n)}{((\eee^s-1)p_n+1)^2},\quad s\in\mr.$$

Observe that $$f_\alpha^\prime(s)=\int_0^\infty \frac{\eee^s\mmp\{Z_\alpha>y\}}{(\eee^s-1)\mmp\{Z_\alpha>y\}+1}{\rm d}y,\quad s\in\mr.$$ We already know that $f_\alpha$ is strictly convex on $\mr$. Hence, $f^\prime_\alpha$ is strictly increasing on $\mr$. Also, it can be seen, with the help of Lebesgue's dominated convergence theorem, that $f_\alpha^\prime$ is continuous on $\mr$. Hence, given $b>0$, the equation $f_\alpha^\prime(s)=b$ has a unique solution that we denote by $s_b$. Since $f_\alpha^\prime(0)=\me[Z_\alpha]=1$, we conclude that $s_b>0$ if $b>1$ and $s_b<0$ if $b\in (0,1)$. For each $b>0$, $J_\alpha(b)=bs_b-f_\alpha(s_b)$. In particular, $J_\alpha(1)=0$ and $J_\alpha^\prime(b)=s_b$ for $b>0$, whence $J_\alpha(b)>0$ whenever $b>0$, $b\neq 1$.

Next, we show that, for each fixed $s\in\mr$,
\begin{equation}\label{eq:basic convergence}
\lim_{t\to\infty}\frac{\psi_t(s)}{U(t)}=f_\alpha(s)\quad\text{and}\quad \lim_{t\to\infty}\frac{\psi^\prime_t(s)}{U(t)}=f_\alpha^\prime(s).
\end{equation}
Put $\nu(t):=\inf\{k\geq 1: S_k>t\}$ for $t\geq 0$. It is known (see, for instance, Theorem 7 in \cite{Feller:1949} in the case $\alpha\in (0,1)$ and Corollary 2.3 in \cite{Kabluchko+Marynych:2016} in the case $\alpha=0$) that $\nu(t)/U(t)$ converges in distribution to a random variable $Z_\alpha$ as $t\to\infty$. Put $S_0:=0$ and write, with the help of the equality $\{S_k\leq t\}=\{\nu(t)>k\}$, which holds for $k\in\mn_0$ and $t\geq 0$, and the change of variable, 
$$
\psi^\prime_t(s)+1=\int_0^\infty\frac{\eee^s\mmp\{S_{\lfloor x\rfloor}\leq t\}}{(\eee^s-1)\mmp\{S_{\lfloor x\rfloor}\leq t\}+1}{\rm d}x=U(t)\int_0^\infty\frac{\eee^s \mmp\{\nu(t)>\lfloor yU(t)\rfloor\}}{(\eee^s-1)\mmp\{\nu(t)>\lfloor yU(t)\rfloor\}+1}{\rm d}y.
$$
Since the function $y\mapsto \mmp\{Z_\alpha\leq y\}$ is continuous on $[0,\infty)$, we infer $$\lim_{t\to\infty}\sup_{y\in [0,\,\infty]}|\mmp\{\nu(t)>\lfloor yU(t)\rfloor\}-\mmp\{Z_\alpha>y\}|=0$$ (the supremum is taken over the compactified nonnegative halfline). This together with the inequality $$\Big|\frac{x}{\gamma x+1}-\frac{y}{\gamma y+1}\Big|\leq |x-y|,$$ which holds for any $x,y\geq 0$ and a constant $\gamma>0$, enables us to conclude that, for each fixed $s\in\mr$,
$$\lim_{t\to\infty}\sup_{y\in [0,\,\infty]}\Big|\frac{\eee^s \mmp\{\nu(t)>\lfloor yU(t)\rfloor\}}{(\eee^s-1)\mmp\{\nu(t)>\lfloor yU(t)\rfloor\}+1}-\frac{\eee^s \mmp\{Z_\alpha>y\}}{(\eee^s-1)\mmp\{Z_\alpha>y\}+1}\Big|=0.$$ As a consequence, $$\lim_{t\to\infty}\int_0^\infty \frac{\eee^s \mmp\{\nu(t)>\lfloor yU(t)\rfloor\}}{(\eee^s-1)\mmp\{\nu(t)>\lfloor yU(t)\rfloor\}+1}{\rm d}y=\int_0^\infty\frac{\eee^s \mmp\{Z_\alpha>y\}}{(\eee^s-1)\mmp\{Z_\alpha>y\}+1}{\rm d}y.$$
Thus, we have proved the second limit relation in \eqref{eq:basic convergence}, for each fixed $s\in\mr$. The convergence is actually locally uniform, as the convergence of monotone functions to a continuous limit. This entails the first limit relation in \eqref{eq:basic convergence}.

For later needs, we note that essentially the same argument enables us to conclude that, for each fixed $s\in\mr$,
\begin{equation}\label{eq:basic convergence2}
\lim_{t\to\infty}\frac{\psi_t^{\prime\prime}(s)}{U(t)}=f_\alpha^{\prime\prime}(s).
\end{equation}

According to \eqref{eq:basic convergence}, $\psi_t(s_b)=f_\alpha(s_b)U(t)+o(U(t))$ as $t\to\infty$, whence
\begin{multline*}
-\log \big(\eee^{-s_b\lfloor b\,U(t)\rfloor} \me[\eee^{s_b\hat N(t)}]\big)=s_b\lfloor bU(t)\rfloor-\psi_t(s_b)=(s_b b-f_\alpha(s_b))U(t)+o(U(t))\\=J_\alpha(b)U(t)+o(U(t)),\quad t\to\infty.
\end{multline*}
In view of \eqref{eq:change of measure3} with $s=s_b$, it remains to show that $$-\log \mmp^{(s_b,\,t)}\{\hat N(t)=\lfloor b\,U(t)\rfloor\}=o(U(t)),\quad t\to\infty.$$ Write $$\me^{(s_b,\,t)}[\hat N(t)]=\psi_t^\prime(s_b)= f^\prime_\alpha(s_b)U(t)+o(U(t))=\lfloor b\,U(t)\rfloor+o(U(t)),\quad t\to\infty$$ having utilized \eqref{eq:change of measure2} with $h(x)=x$ for the first equality and \eqref{eq:basic convergence} for the second. Using once again \eqref{eq:change of measure2} together with \eqref{eq:basic convergence2} we obtain $${\rm Var}^{(s_b,\,t)}[\hat N(t)]=\psi_t^{\prime\prime}(s_b)~\sim~ f_\alpha^{\prime\prime}(s_b)U(t),\quad t\to\infty.$$ In particular, $\lim_{t\to\infty}{\rm Var}^{(s_b,\,t)}[\hat N(t)]=\infty$, so that, by Proposition \ref{prop:Dolg}, $$-\log \mmp^{(s_b,\,t)}\{\hat N(t)=\lfloor b\,U(t)\rfloor\}=2^{-1}\log (2\pi {\rm Var}^{(s_b,\,t)}[\hat N(t)])+o(1)=o(U(t)),\quad t\to\infty.$$

The proof of Theorem \ref{thm:infinite deviation} is complete.

\subsection{Proofs of Theorems \ref{thm: regular deviation}, \ref{thm:semi}, \ref{thm: light deviation} and \ref{thm: light deviation2}}

Following the argument given at the beginning of the proof of Theorem 1.1 in \cite{Shirai:2006} we first show that as far as the logarithmic asymptotic behavior of $\mmp\{\hat N(t)=\lfloor b\mu^{-1}t\rfloor\}$ is concerned one can assume that the sequence $(\1_{\{\hat S_n\leq t\}})_{n\geq 1}$ has an initial block of $1$s of size $\lfloor b\mu^{-1}t\rfloor$ followed by $0$s. We stress that such a simplification is not possible under the assumptions of Theorem \ref{thm:infinite deviation}. The reason is that $\lim_{t\to\infty}\mmp\{S_{\lfloor v\,U(t)\rfloor}\leq t\}\in (0,1)$ for each $v>0$, whereas in the case $\mu<\infty$, by the weak law of large numbers, $\lim_{t\to\infty}\mmp\{S_{\lfloor v\mu^{-1}t\rfloor}\leq t\}$ is either $1$ or $0$ depending on whether $v\in (0,1)$ or $v>1$.
\begin{lemma}\label{lem:shirai}
Let $b>0$, $b\neq 1$. Assume that $\mu=\me [\xi]<\infty$, $\mmp\{\xi\leq x\}<1$ for all $x\geq 0$ and, if $b>1$, that $\mmp\{\xi\leq b^{-1}\mu\}>0$.  Then
$$\log \mmp\{\hat N(t)=\lfloor b\mu^{-1}t\rfloor\}= \log \bigg(\prod_{n=1}^{\lfloor b\mu^{-1}t\rfloor}\mmp\{S_n\leq t\}\prod_{j\geq \lfloor b\mu^{-1}t\rfloor+1}\mmp\{S_j> t\}\bigg) +O(t),\quad t\to\infty.$$
\end{lemma}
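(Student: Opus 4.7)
The plan is to prove the two directions separately. The lower bound is immediate: the single configuration $\eta_1=\cdots=\eta_k=1$, $\eta_j=0$ for $j>k$ is contained in $\{\hat N(t)=k\}$, so isolating this term in the expansion
\begin{equation*}
\mmp\{\hat N(t)=k\}=\sum_{A\subseteq\mn,\,|A|=k}\prod_{n\in A}p_n\prod_{j\notin A}(1-p_j),\quad p_n:=\mmp\{S_n\leq t\},
\end{equation*}
yields $\mmp\{\hat N(t)=k\}\geq T_0$ with $T_0:=\prod_{n=1}^{k}p_n\prod_{j>k}(1-p_j)$. The task is thus to establish the matching upper bound $\log\mmp\{\hat N(t)=k\}\leq\log T_0+O(t)$.

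For the upper bound I propose an exponential Chernoff inequality with a carefully chosen parameter. Writing $u_n:=(1-p_n)/p_n$ and $v_n:=p_n/(1-p_n)$, I apply
\begin{equation*}
\mmp\{\hat N(t)=k\}\leq \eee^{-sk}\prod_{n\geq 1}(1-p_n+\eee^s p_n),
\end{equation*}
valid for any $s\in\mr$ thanks to independence of the $\hat S_n$, taking $s:=\log u_k$. Since $p_n$ is non-increasing in $n$, $u_n$ is non-decreasing, and the product splits naturally at $n=k$: for $n\leq k$, $u_n\leq u_k=\eee^s$ gives $1-p_n+\eee^s p_n\leq 2\eee^s p_n$; for $n>k$, $u_n\geq u_k$ gives $1-p_n+\eee^s p_n=(1-p_n)(1+\eee^s v_n)\leq(1-p_n)\exp(\eee^s v_n)$. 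Multiplying and dividing by $T_0$,
\begin{equation*}
\log\mmp\{\hat N(t)=k\}-\log T_0\leq k\log 2+u_k V,\qquad V:=\sum_{n>k}v_n,
\end{equation*}
so the residual task is to verify $u_k V=O(t)$.

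For $b\in(0,1)$ this is elementary: the weak law of large numbers gives $p_k\to 1$ because $k\mu<t$. Since $p_n\leq p_{k+1}$ for $n>k$, one has $v_n\leq p_n/(1-p_{k+1})$, hence $V\leq U(t)/(1-p_{k+1})$; using the trivial $1-p_k\leq 1-p_{k+1}$,
\begin{equation*}
u_k V\leq\frac{(1-p_k)U(t)}{p_k(1-p_{k+1})}\leq\frac{U(t)}{p_k}=O(t),
\end{equation*}
by the elementary renewal theorem. For $b>1$ the roles reverse: $p_k\to 0$ and $u_k\to\infty$. Here the assumption $\mmp\{\xi\leq b^{-1}\mu\}>0$ combined with $\xi\geq 0$ (so that $\Lambda(s)=\me[\eee^{s\xi}]$ is finite and smooth on $(-\infty,0]$) unlocks the left-tail Cram\'er--Chernoff machinery. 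On one side the Chernoff bound yields $p_n\leq \eee^{-nI^\ast(t/n)}$, and since $\frac{d}{dn}[nI^\ast(t/n)]=-\log\Lambda(s(t/n))>0$ at $t/n=\mu/b$, a geometric decay $\sum_{n>k}p_n\leq C\eee^{-kI^\ast(\mu/b)}$ follows; combined with $v_n\leq 2p_n$ (as $p_{k+1}\to 0$), $V\leq 2C\eee^{-kI^\ast(\mu/b)}$. On the other side an exponential tilt of the law of $\xi$ making the tilted mean equal to $\mu/b$, combined with a local central limit theorem for the tilted random walk, produces the matching sharp lower bound $p_k\gtrsim \eee^{-kI^\ast(\mu/b)}/\sqrt{k}$. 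Combining, $u_k\leq 1/p_k\leq C'\sqrt{k}\,\eee^{kI^\ast(\mu/b)}$, whence $u_k V=O(\sqrt{k})=O(\sqrt{t})$.

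In either regime $u_k V=O(t)$, so $\log\mmp\{\hat N(t)=k\}\leq\log T_0+k\log 2+u_k V=\log T_0+O(t)$; together with the trivial lower bound this proves the lemma. The main technical obstacle is the $b>1$ case: controlling $u_k V$ hinges on a Cram\'er-sharp lower bound for $p_k$ matching the Chernoff upper bound up to polynomial factors in $t$; the naive bound $p_k\geq\mmp\{\xi\leq b^{-1}\mu\}^k$ is off by an exponential factor and would only deliver $u_k V=\exp(O(t))$, far too weak.
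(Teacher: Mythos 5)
Your Chernoff/tilting framework is sound and genuinely different from the paper's proof, which expands $\mmp\{\hat N(t)=m\}$ as a sum over $m$-subsets, factors out $\prod_j(1-p_j)$, and for $b>1$ identifies the resulting sum with the $m$-fold Lebesgue--Stieltjes convolution $U^{\ast m}(t)$ of the renewal measure, whose Laplace transform gives $\log U^{\ast m}(t)=O(t)$. Your $b\in(0,1)$ case is correct as written and appreciably simpler than the paper's set-counting argument. But the $b>1$ case, as you have it, has a real gap: the matching lower bound $p_k\gtrsim\eee^{-kI^\ast(\mu/b)}/\sqrt{k}$ is a Bahadur--Rao-type refinement that is not available under the lemma's hypotheses. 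The lemma does not assume the distribution of $\xi$ is nonlattice, and it allows the boundary case $\mathrm{ess\,inf}\,\xi=\mu/b$ (where the tilt making the mean equal $\mu/b$ does not exist and the local CLT for the tilted walk breaks down). You flag the need for this sharp bound yourself, but invoking it without addressing these hypotheses is a genuine hole.

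The good news is that you do not need Cram\'er-sharp machinery at all; an elementary observation closes the gap and in fact unifies the two cases. Since $\xi\geq 0$, for $n>k$ one has $\{S_n\leq t\}\subseteq\{S_k\leq t\}\cap\{S_n-S_k\leq t\}$, and by independence
\begin{equation*}
p_n\ \leq\ p_k\,p_{n-k},\qquad n>k.
\end{equation*}
Also $1-p_n\geq 1-p_{k+1}$ for $n\geq k+1$, and trivially $1-p_k\leq 1-p_{k+1}$. Therefore
\begin{equation*}
u_kV\ =\ \frac{1-p_k}{p_k}\sum_{n>k}\frac{p_n}{1-p_n}\ \leq\ \frac{1-p_k}{1-p_{k+1}}\sum_{n>k}p_{n-k}\ \leq\ \sum_{j\geq1}p_j\ =\ U(t)\ =\ O(t)
\end{equation*}
by the elementary renewal theorem. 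This requires no law of large numbers, no large deviations, and no case split on $b$, and it makes your proof of the lemma both complete and shorter than the paper's.
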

\begin{remark}
The assumptions concerning the support of the distribution of $\xi$ ensure that the logarithm on the right-hand side is not equal to $-\infty$ for large fixed $t$. Although a justification of this claim is scattered over the proof of Lemma \ref{lem:shirai}, we prefer to give here a precise argument. For $n\in \{1,2,\ldots,\lfloor b\mu^{-1}t\rfloor\}$,
\begin{multline}\label{eq:shirai_not_vanishing}
\mmp\{S_n\leq t\}\geq \mmp\{S_{\lfloor b\mu^{-1}t\rfloor}\leq t\}\geq \mmp\{\xi_1\leq t/\lfloor b\mu^{-1}t\rfloor,\ldots, \xi_{\lfloor b\mu^{-1}t\rfloor}\leq t/\lfloor b\mu^{-1}t\rfloor\}\\\geq (\mmp\{\xi\leq b^{-1}\mu\})^{\lfloor b\mu^{-1}t\rfloor},\quad t\geq b^{-1}\mu.
\end{multline}
If $b\in (0,1)$, then $\mmp\{\xi\leq b^{-1}\mu\}>0$ holds automatically, whereas, if $b>1$, it holds by 
assumption. The assumption $\mmp\{\xi\leq x\}<1$ for all $x\geq 0$ ensures $\mmp\{S_j>t\}>0$ for all $j\geq 1$ and $t>0$. Thus the product under the logarithm does not vanish for 
large fixed $t$.
\end{remark}
\begin{proof}[Proof of Lemma~\ref{lem:shirai}]
Put $m:=\lfloor b\mu^{-1}t\rfloor$. In what follows $|A|$ denotes the number of elements of a finite set $A$ and $A^c$ denotes the complement of $A$. Recall the notation $p_n(t)=\mmp\{S_n\leq t\}$ for $n\in\mn$ and $t>0$. Write
\begin{multline*}
\mmp\{\hat N(t)=m\}=\mmp\{\hat S_n\leq t~\text{with}~n\in A_t~\text{for some}~A_t\subset \mn,|A_t|=m~\text{and}~\hat S_n>t~\text{with}~n\in A_t^c\}\\=\sum_{A_t: |A_t|=m}\prod_{n\in A_t}p_n(t)\prod_{j\in A_t^c}(1-p_j(t))\geq \prod_{n=1}^m p_n(t)\prod_{j\geq m+1}(1-p_j(t)).
\end{multline*}
Thus, it remains to show that, as $t\to\infty$, 
$$ \log \Big(\sum_{A_t: |A_t|=m}\prod_{n\in A_t}p_n(t)\prod_{j\in A_t^c}(1-p_j(t))\Big) \leq \log \Big(\prod_{n=1}^m p_n(t)\prod_{j\geq m+1}(1-p_j(t))\Big) + O(t).$$
Put $q(x):=x(1-x)^{-1}$ for $x\in [0,1)$. The function $q$ strictly increases and satisfies $q(x)\leq 3x$ for $x\in [0,2/3]$. Observe that $1-p_j(t)>0$ for each $j\in\mn$ and each $t>0$ by assumption, that $-\log (1-p_j(t))\sim p_j(t)$ as $j\to\infty$ and $\sum_{j\geq 1}p_j(t)=U(t)<\infty$ for each $t>0$. Hence, $\prod_{j\geq 1}(1-p_j(t))>0$ for each $t>0$. Writing $$\sum_{A_t: |A_t|=m}\prod_{n\in A_t}p_n(t)\prod_{j\in A_t^c}(1-p_j(t))=\prod_{j\geq 1}(1-p_j(t))\sum_{A_t: |A_t|=m}\prod_{n\in A_t}q(p_n(t))$$ and $$\prod_{n=1}^m p_n(t)\sum_{j\geq m+1}(1-p_j(t))=\prod_{j\geq 1}(1-p_j(t))\prod_{n=1}^m q(p_n(t))$$ and dividing both equalities by $\prod_{j\geq 1}(1-p_j(t))$ we conclude that it is enough to prove that
\begin{equation}\label{eq:approx}
\log \Big(\sum_{A_t: |A_t|=m}\prod_{n\in A_t}q(p_n(t))\Big)  \leq \log \Big(\prod_{n=1}^m q(p_n(t))\Big) + O(t),\quad t\to \infty.
\end{equation}

Now we have to treat two cases separately.

\noindent {\sc Case $b>1$}. Each set $A_t$ containing $m$ positive integers coincides with some set $B_{t,\,n} \subseteq \{1,2,\ldots,n\}$ such that $n\in B_{t,\,n}$ for some $n\geq m$. In other words, $n$ is the largest element of $A_t=B_{t,n}$. Decomposing with respect to this largest element gives a representation $$\sum_{A_t: |A_t|=m}\prod_{n\in A_t}q(p_n(t))=\sum_{n\geq m}\sum_{B_{t,\,n}}\prod_{j\in B_{t,\,n}}q(p_j(t)).$$ Each $m$-tuple coming from $B_{t,\,n}$ can be written (omitting the argument $t$ for simplicity) as $q(p_{j_1})\cdot\ldots\cdot q(p_{j_{m-1}})q(p_n)$ for some positive integers $j_1<j_2<\ldots<j_{m-1}\leq n-1$. The number of such tuples is $\binom{n-1}{m-1}$. Since $q$ is an increasing function, and $k\mapsto p_k$ is nonincreasing, we infer $q(p_{j_1})\leq q(p_1),\ldots,q(p_{j_{m-1}})\leq q(p_{m-1})$. This yields $$\sum_{A_t: |A_t|=m}\prod_{n\in A_t}q(p_n(t))\leq \prod_{j=1}^{m-1} 
q(p_j(t))\sum_{n\geq m}\binom{n-1}{m-1}q(p_n(t)).$$ For $n\geq m$, $p_n(t)\leq p_m(t)$, and $p_m(t)$ is the probability of a large deviation event $\{S_{\lfloor b\mu^{-1}t\rfloor}\leq t\}$. In particular, 
$b>1$ secures $\lim_{t\to\infty}p_m(t)=0$. Hence, for large $t$ and all $n\geq m$, $p_n(t)\leq 2/3$, whence $$\prod_{j=1}^{m-1} q(p_j(t))\sum_{n\geq m}\binom{n-1}{m-1}q(p_n(t))\leq 3 \prod_{j=1}^{m-1} q(p_j(t))\sum_{n\geq m}\binom{n-1}{m-1}p_n(t)$$ for large $t$. There are several ways to prove the equality $$\sum_{n\geq m}\binom{n-1}{m-1}p_n(t)=U^{\ast(m)}(t),\quad t>0,$$ where $U^{\ast (m)}$ is the $m$-fold Lebesgue-Stieltjes convolution of $U$ with itself. Perhaps, the simplest way is to use Laplace-Stieltjes transforms. Indeed, put $\varphi(s):=\me [\eee^{-s\xi}]$ for $s\geq 0$. Then $\int_{[0,\,\infty)}\eee^{-st}{\rm d}U(t)=\varphi(s)(1-\varphi(s))^{-1}$ for $s>0$ and thereupon $$\int_{[0,\,\infty)}\eee^{-st}{\rm d}U^{\ast (m)}(t)=\Big(\frac{\varphi(s)}{1-\varphi(s)}\Big)^m,\quad s>0.$$ On the other hand, using $$\sum_{n\geq m}\binom{n-1}{m-1}x^n=\Big(\frac x{1-x}\Big)^m,\quad x\in [0,1)$$
we infer $$\int_{[0,\,\infty)}\eee^{-st}{\rm d}\Big(\sum_{n\geq m} \binom{n-1}{m-1}\mmp\{S_n\leq t\}\Big)=\sum_{n\geq m}\binom{n-1}{m-1}(\varphi(s))^n=\Big(\frac{\varphi(s)}{1-\varphi(s)}\Big)^m,\quad s>0.$$ Summarizing, we have already shown 
that $$\sum_{A_t: |A_t|=m}\prod_{n\in A_t}q(p_n(t))\leq 3\prod_{j=1}^{m-1}q(p_j(t)) U^{\ast(m)}(t)$$ for large $t$.

To prove that
\begin{equation}\label{eq:convolution}
\log U^{\ast (m)}(t) = O(t) \quad \mbox{as } t\to \infty,
\end{equation}
we use a crude estimate $U^{\ast (m)}(t)\leq \sum_{k\geq 1}U^{\ast (k)}(t)$ for large $t$. Since $$\int_{[0,\infty)}\eee^{-st}{\rm d}\Big(\sum_{k\geq 1}U^{\ast (k)}(t)\Big)=\sum_{k\geq 1}\Big(\frac{\varphi(s)}{1-\varphi(s)}\Big)^k=\frac{\varphi(s)}{1-2\varphi(s)}<\infty,\quad s>s_0,$$ where $s_0$ is the unique solution to $\varphi(s)=1/2$, we conclude that $\sum_{k\geq 1}U^{\ast (k)}(t)=o(\eee^{(s_0+\varepsilon)t})$ as $t\to\infty$ for all $\varepsilon>0$, by Theorem 2.2a on p.~39 in \cite{Widder:1946}. This entails\footnote{Assume that the distribution of $\xi$ is nonlattice. Then, by Corollary 3.4 in \cite{Buraczewski+Iksanov+Marynych:2025} with $\alpha= b^{-1}\mu$, $j=\lfloor b\mu^{-1}t\rfloor$ and bounded $y$, $$U^{\ast (m)}(t)~\sim~\frac{\mu^{1/2}}{\theta (2\pi b\sigma^2)^{1/2}}\frac{\eee^{\theta t}}{t^{1/2}}\Big(\frac{\varphi(\theta)}{1-\varphi(\theta)}\Big)^m,\quad t\to\infty$$ for appropriate positive constants $\theta$ and $\sigma^2$. Although, as our argument above demonstrates, such a precision is not needed, the latter limit relation also secures \eqref{eq:convolution}.} \eqref{eq:convolution}. As a consequence, $$ \log \Big(\sum_{A_t: |A_t|=m}\prod_{n\in A_t}q(p_n(t))\Big) \leq \log \prod_{j=1}^{m-1}q(p_j(t)) + O(t), \quad t\to \infty.$$ To replace $\prod_{j=1}^{m-1}$ with $\prod_{j=1}^m$ on the right-hand side, observe that~\eqref{eq:shirai_not_vanishing} implies that
$$
\log p_m(t)\geq m\log F(\mu/b),\quad t\geq b^{-1}\mu,
$$
where $F$ is the distribution function of $\xi$. Recalling that $\lim_{t\to\infty}\log (1-p_m(t))=0$ we finally arrive at $\log q(p_m(t))=\log p_m(t)-\log (1-p_m(t))=O(t)$ as $t\to\infty$. The proof of~\eqref{eq:approx} is complete in the case $b>1$.

\noindent {\sc Case $b\in (0,1)$}. Put $n:=\lfloor c\mu^{-1}t\rfloor$ for $c\geq 3b\vee 1$ and define, for $k\in\{0,1,\ldots, m\}$, the class of sets $$\mathcal{A}_t(m,k):=\{A_t\subset \mn: |A_t|=m, |A_t \cap \{n,n+1,\ldots\}|=k\}.$$ Then $$\sum_{A_t: |A_t|=m}\prod_{n\in A_t}q(p_n(t))=\sum_{k=0}^m\sum_{A_t\in \mathcal{A}_t(m,k)}\prod_{i\in A_t\cap \{1,2,\ldots, n\}}q(p_i(t))\prod_{j\in A_t\cap\{n+1,n+2,\ldots\}}q(p_j(t)).$$ The first products contain $m-k$ factors, the number of such products is $\binom{n}{m-k}$, and these are of the form (omitting again $t$ for simplicity) $q(p_{i_1})\cdot\ldots\cdot q(p_{i_{m-k}})$ with positive integer $i_1<\ldots<i_{m-k}\leq n$. Since $q$ is an increasing function and $k\mapsto p_k$ is decreasing, we infer $q(p_{i_1})\leq q(p_1),\ldots$, $q(p_{i_{m-k}})\leq q(p_{m-k})$. The second products are of the form $q(p_{i_{m-k+1}})\cdot\ldots\cdot q(p_{i_m})$, and each such a product is dominated by $(k!)^{-1}(\sum_{j\geq n+1}q(p_j(t)))^k$. This is justified by the multinomial theorem, which states that the coefficient in front of the product of monomials is $k!$. Summarizing, $$\sum_{A_t: |A_t|=m}\prod_{n\in A_t}q(p_n(t))\leq \sum_{k=0}^m\binom{n}{m-k} \prod_{i=1}^{m-k}q(p_i(t))\frac{1}{k!}\Big(\sum_{j\geq n+1}q(p_j(t))\Big)^k.$$ The sequence $i \mapsto \binom{n}{i}$ is increasing for $i\leq \lfloor n/2\rfloor$. Since $m-k\leq m\leq \lfloor n/2\rfloor$ by the choice of $n$, we conclude that $\binom{n}{m-k}\leq \binom{n}{m}$. Since $p_i(t)\geq p_m(t)$ for $i\in\{1,\ldots, m\}$ and $\lim_{t\to\infty}p_m(t)=1$, in particular, $p_m(t)\geq 1/2$ for large $t$, we infer $q(p_i(t))\geq q(p_m(t))\geq q(1/2)=1$. Hence, $\prod_{i=1}^{m-k}q(p_i(t))\leq \prod_{i=1}^m q(p_i(t))$ for large $t$. As a consequence, $$\sum_{A_t: |A_t|=m}\prod_{n\in A_t}q(p_n(t))\leq \binom{n}{m}\prod_{i=1}^m q(p_i(t))\sum_{k\geq 0}\frac{1}{k!}\Big(\sum_{j\geq n+1}q(p_j(t))\Big)^k$$ for large $t$. For $j\geq n+1$ and large $t$, $p_j(t)\leq 2/3$. Also, $\binom{n}{m}\leq \sum_{k=0}^n\binom{n}{k}=2^n$. Hence, $$\binom{n}{m}\sum_{k\geq 0}\frac{1}{k!}\Big(\sum_{j\geq n+1}q(p_j(t))\Big)^k\leq 2^n \sum_{k\geq 0}\frac{1}{k!}\Big(3\sum_{j\geq n+1}p_j(t)\Big)^k\leq 2^n\eee^{3U(t)}.$$ By the elementary renewal theorem, $$ \log\Big(\binom{n}{m}\sum_{k\geq 0}\frac{1}{k!}\Big(\sum_{j\geq n+1}q(p_j(t))\Big)^k\Big) = O(t),\quad t\to\infty.$$ This completes the proof in the case $b\in(0,1)$.
\end{proof}

\begin{lemma}\label{lem:appr}
Suppose $\mu=\me [\xi]<\infty$. Then, as $t\to\infty$,
\begin{equation}\label{eq:appr1}
-\log \prod_{n\ge \lfloor \mu^{-1} t\rfloor+1}\mmp\{S_n>t\}\ =\ O(t)
\end{equation}
and
\begin{equation}\label{eq:appr2}
-\log \prod_{n=1}^{\lfloor \mu^{-1} t\rfloor}\mmp\{S_n\leq t\}\ =\ O(t).
\end{equation}
\end{lemma}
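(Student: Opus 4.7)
The plan is to exploit two structural properties of $p_n(t) := \mmp\{S_n \leq t\}$: the monotonicity $p_{n+1}(t) \leq p_n(t)$ (coming from $S_{n+1} \geq S_n$ a.s.) and the elementary renewal theorem $U(t) = \sum_{n\geq 1} p_n(t) = O(t)$ as $t \to \infty$. Writing $n_1 := \lfloor \mu^{-1} t\rfloor$, the whole proof reduces to establishing uniform-in-$t$ positive lower bounds on $p_{n_1}(t)$ and on $1 - p_{n_1+1}(t)$.

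For this reduction, observe that $n_1\mu \leq t < (n_1+1)\mu$, hence
$$p_{n_1}(t) \geq \mmp\{S_{n_1} \leq n_1\mu\} \quad \text{and} \quad 1 - p_{n_1+1}(t) \geq \mmp\{S_{n_1+1} \geq (n_1+1)\mu\}.$$
Thus it suffices to show that $\liminf_{n\to\infty}\mmp\{S_n \leq n\mu\}>0$ and $\liminf_{n\to\infty} \mmp\{S_n \geq n\mu\}>0$. Under finite variance, this is immediate from the classical central limit theorem (both liminfs equal $1/2$), and under the more general assumption that $\xi$ lies in the domain of attraction of a stable law of index $\alpha \in (1,2]$, the corresponding stable limit theorem yields positive limits as well. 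I expect this step to be the main obstacle when one wishes to dispense with all moment assumptions beyond $\mu < \infty$, but the positivity in the fully general nondegenerate case is classical and follows from the fluctuation theory of random walks.

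With the bounds $p_{n_1}(t) \geq c_1 > 0$ and $1 - p_{n_1+1}(t) \geq c_2 > 0$ in hand, both estimates follow quickly. For \eqref{eq:appr2}: monotonicity gives $-\log p_n(t) \leq -\log p_{n_1}(t) \leq -\log c_1$ for all $n \leq n_1$, so summing over $n_1 = O(t)$ terms yields the claim. For \eqref{eq:appr1}: applying the elementary inequality $-\log(1-x) \leq x/(1-x)$, valid for $x \in [0,1)$, together with monotonicity gives
$$-\log \prod_{n > n_1}(1 - p_n(t)) = \sum_{n > n_1}(-\log(1 - p_n(t))) \leq \frac{1}{1-p_{n_1+1}(t)}\sum_{n > n_1} p_n(t) \leq \frac{U(t)}{c_2},$$
which is $O(t)$ by the elementary renewal theorem. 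This completes the argument.
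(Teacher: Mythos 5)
Your treatment of the two assertions stands on rather different footing.

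For \eqref{eq:appr2}, your reduction to $\liminf_{n\to\infty}\mmp\{S_n\leq n\mu\}>0$ is essentially the paper's, but you leave that lower bound unproved. You correctly anticipate that it is the main obstacle and then dismiss it as following from ``classical fluctuation theory''; in fact the CLT and stable limit theorems you cite require a second moment or a domain-of-attraction hypothesis, neither of which is assumed here. The paper closes the case $\me[\xi^2]=\infty$ by a Cram\'er-type exponential-tilting argument borrowed from Jain and Pruitt, which crucially uses that $\xi\geq 0$, so that $\Lambda(-s)=\me[\eee^{-s\xi}]<\infty$ for all $s\geq 0$. That is a genuine argument, not folklore.

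For \eqref{eq:appr1} your route is genuinely different from the paper's, and it has a fatal gap. Via $-\log(1-x)\leq x/(1-x)$ and monotonicity you reduce to the uniform lower bound $\liminf_{n\to\infty}\mmp\{S_n\geq n\mu\}>0$. Unlike the left-tail bound needed for \eqref{eq:appr2}, this is a right-tail bound that exponential tilting cannot reach, because $\me[\eee^{s\xi}]$ may be infinite for every $s>0$. Worse, the claim is false in general. Take $\mmp\{\xi>x\}\sim c/(x(\log x)^2)$ as $x\to\infty$: then $\mu=\me[\xi]<\infty$, but $\xi$ lies in the domain of attraction of a totally skewed $1$-stable law, and the centring $b_n$ in $(S_n-b_n)/a_n\Rightarrow Z$ satisfies $(\mu n-b_n)/a_n\sim\log n\to\infty$, whence $\mmp\{S_n\geq n\mu\}\to 0$ (indeed of order $1/\log n$). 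Consequently the $c_2>0$ you rely on does not exist, and your final estimate $U(t)/(1-p_{\lfloor\mu^{-1}t\rfloor+1}(t))$ is of order $t\log t$, not $O(t)$. The paper proves \eqref{eq:appr1} by an entirely different mechanism: it bounds $\mmp\{S_n>t\}\geq\mmp\{\xi_1\wedge a+\ldots+\xi_n\wedge a>t\}$ and invokes a known estimate for the full tail product for the truncated (finite-variance) walk; no pointwise lower bound on the single factor $\mmp\{S_{\lfloor\mu^{-1}t\rfloor+1}>t\}$ is ever required.
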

\begin{proof}
Relation \eqref{eq:appr1} was proved in Lemma 6.1 of \cite{Alsmeyer+Iksanov+Kabluchko:2025}, see also pp.~123-124 in \cite{Hough+Krishnapur+Peres+Virag:2009}, under the additional assumption that the distribution of $\xi$ belongs to the domain of attraction of an $\alpha$-stable distribution for some $\alpha\in (1,2]$. For each $a>0$, $$-\log \mmp\{S_n>t\}\leq -\log \mmp\{\xi_1\wedge a+\ldots+\xi_n\wedge a>t\},\quad n\in\mn,~t\geq 0,$$ and the distribution of $\xi\wedge a$ has a finite second moment and thus belongs to the domain of attraction of a $2$-stable (normal) distribution. This shows that relation \eqref{eq:appr1} holds under the sole assumption that $\mu<\infty$.

Now we prove~\eqref{eq:appr2}. Recall that it is our standing assumption that the distribution of $\xi$ is nondegenerate. Observe that
$$
-\frac{1}{t}\log \prod_{n=1}^{\lfloor \mu^{-1} t\rfloor}\mmp\{S_n\leq t\}=\frac{1}{t}\sum_{n=1}^{\lfloor \mu^{-1} t\rfloor}(-\log \mmp\{S_n\leq t\})\leq -\frac{1}{\mu}\log\mathbb{P}\{S_{\lfloor \mu^{-1}t\rfloor+1}\leq t\}.
$$
Put $n:=\lfloor \mu^{-1}t\rfloor+1$ and observe that $n\leq \mu^{-1}t+1$ implies $t\geq \mu(n-1)$. Thus, it suffices to show that there exists $c\in (0,1]$ such that for all sufficiently large $n\in\mathbb{N}$,
\begin{equation}\label{eq:appr2_proof1}
\mathbb{P}\{S_n\leq \mu(n-1)\}\geq c. 
\end{equation}
This relation is obvious if $\mathbb{E}[\xi^2]<\infty$ because, by the central limit theorem, $\lim_{n\to\infty}\mmp\{S_n\leq \mu(n-1)\}=1/2$. Thus, in what follows we assume that $\mathbb{E}[\xi^2]=\infty$. To prove~\eqref{eq:appr2_proof1} we apply the argument given at the beginning of the proof of Theorem 2.1
in~\cite{Jain+Pruitt:1987}, with $x_n=\mu(1-n^{-1})$. Define the function $m^{\ast}:[0,+\infty)\mapsto \mathbb{R}$ by $m^{\ast}(s)=\mathbb{E}[\xi\eee^{-s\xi}]/\mathbb{E}[\eee^{-s\xi}]=\Lambda'(-s)/\Lambda(-s) 
$ and observe that $m^{\ast}$ is continuous and strictly decreasing on $(0,+\infty)$ with $m^{\ast}(0+)=\mu$. Let $\lambda_n>0$ be the unique solution to $m^{\ast}(\lambda_n)=x_n$. Put also $R(s):=-\log \Lambda(-s)-sm^{\ast}(s)$ for $s\geq 0$. According to the aforementioned argument from~\cite{Jain+Pruitt:1987} it suffices to show that
\begin{equation}\label{eq:appr2_proof2}
\lim_{n\to\infty}nR(\lambda_n)=0.
\end{equation}
Using $m^{\ast}(0+)=\mu$ we conclude $m^{\ast}(0+)-m^{\ast}(\lambda_n)=\mu n^{-1}$. It is clear that $\lambda_n\to 0+$ as $n\to\infty$. Thus, by the mean value theorem for differentiable functions
$$
(m^{\ast})^{\prime}(\theta_n)\lambda_n=-\frac{\mu}{n}
$$
for some $\theta_n\to 0+$. From this and $\lim_{s\to 0+}(m^{\ast})^{\prime}(s)=-{\rm Var}\,\xi=-\infty$ we obtain that $\lambda_n=o(1/n)$ as $n\to\infty$. In particular, given $\varepsilon>0$ the inequality $\lambda_n\leq \varepsilon n^{-1}$ holds for all sufficiently large $n\in\mathbb{N}$. Note that $nR(\lambda_n)=-n\log\Lambda(-\lambda_n)-n\lambda_n x_n$ and $n\lambda_n x_n\to 0$ as $n\to\infty$. Furthermore, 
$$
-n\log\Lambda(-\lambda_n)\leq -n\log\Lambda(-\varepsilon/n)\sim n(1-\Lambda(-\varepsilon/n))~\to~\varepsilon\mu,\quad n\to\infty.
$$
Here, the first inequality holds for all sufficiently large $n\in\mathbb{N}$. Since $\varepsilon>0$ is arbitrary, this finishes the proof of~\eqref{eq:appr2_proof2} and hence~\eqref{eq:appr2_proof1}. The proof of Lemma \ref{lem:appr} is complete.
\end{proof}

According to Lemma \ref{lem:shirai} and relation \eqref{eq:appr1},
when proving Theorems \ref{thm: regular deviation}, \ref{thm:semi} and formula \eqref{thm: light deviation_s_claim} in Theorem~\ref{thm: light deviation} it suffices to show that the limit relations in focus hold true with \newline $-\log \prod_{n=\lfloor b \mu^{-1} t\rfloor+1}^{\lfloor \mu^{-1}t\rfloor}\mmp\{S_n>t\}$ replacing $-\log \mmp\{\hat N(t)=\lfloor b\mu^{-1}t\rfloor \}$. This can be done along the lines of the proof of Theorems 3.5(b), 3.6 and 3.1(a) in \cite{Alsmeyer+Iksanov+Kabluchko:2025}, in which the asymptotic behavior of $-\log \mmp\{\hat N(t)=0\}=-\log \prod_{n\geq 1}\mmp\{S_n>t\}$ has been investigated. In view of this we only provide sketches referring to~\cite{Alsmeyer+Iksanov+Kabluchko:2025} for details. We give a full proof of formula \eqref{thm: light deviation_d_claim} because its counterpart was not obtained in \cite{Alsmeyer+Iksanov+Kabluchko:2025}.

\begin{proof}[Proof of Theorem \ref{thm: regular deviation}]
We intend to prove that $$\lim_{t\to\infty}\frac{-\log \prod_{n=\lfloor b\mu^{-1}t\rfloor}^{\lfloor \mu^{-1} t\rfloor}\mmp\{S_n> t\}}{t\log t}\ =\ \frac{(\alpha-1)(1-b)}{\mu}.$$ \

By Theorem 3.3 in \cite{Cline+Hsing:2023}, for all $\delta>0$,
\begin{equation}\label{eq:Cline+Hsing LDP}
\lim_{n\to\infty}\sup_{t\ge \delta n}\Big|\frac{\mmp\{S_n- \mu n>t\}}{n\mmp\{\xi>t\}}-1\Big|\ =\ 0.
\end{equation}
Thus, for any $\delta\in (0,\mu b^{-1}(1-b))$,
$$ \sum_{n=\lfloor b\mu^{-1}t\rfloor}^{\lfloor (\mu+\delta)^{-1}t\rfloor}\log \mmp\{S_{n}-\mu n>t-\mu n\}~\sim~ \sum_{n=\lfloor b\mu^{-1}t\rfloor}^{\lfloor (\mu+\delta)^{-1}t \rfloor}\big(\log n+\log \mmp\{\xi>t-\mu n\}\big),\quad t\to\infty.$$
Observe that $$\sum_{n=\lfloor b\mu^{-1}t\rfloor}^{\lfloor (\mu+\delta)^{-1}t\rfloor}\log n~\sim~ \Big(\frac{1}{\mu+\delta}-\frac{b}{\mu}\Big)t\log t, \quad t\to\infty.$$ Further, the assumption concerning the distribution tail of $\xi$ implies that $$\lim_{t\to\infty}\frac{-\log \mmp\{\xi>t\}}{\log t}=\alpha.$$ Hence, for any $\varepsilon\in (0,\alpha)$, all sufficiently large $t$ and all positive integers $n\leq \lfloor (\mu+\delta)^{-1}t\rfloor$, $$(\alpha-\varepsilon)\log (t-\mu n)\leq -\log \mmp\{\xi>t-\mu n\}\leq (\alpha+\varepsilon)\log (t-\mu n).$$ Since
\begin{multline*}
\sum_{n=\lfloor b\mu^{-1}t\rfloor}^{\lfloor (\mu+\delta)^{-1}t \rfloor} \log (t-\mu n)=(\lfloor (\mu+\delta)^{-1}t \rfloor-\lfloor b\mu^{-1}t\rfloor) \log t+\sum_{n=\lfloor b\mu^{-1}t\rfloor}^{\lfloor (\mu+\delta)^{-1}t \rfloor}\log (1-\mu n/t)\\=((\mu+\delta)^{-1}-b\mu^{-1})t\log t+t\int_{b\mu^{-1}}^{(\mu+\delta)^{-1}}\log (1-\mu x){\rm d}x+ o(t),\quad t\to\infty,
\end{multline*}
we conclude that $$\lim_{t\to\infty}\frac{\sum_{n=\lfloor b\mu^{-1}t\rfloor}^{\lfloor (\mu+\delta)^{-1}t \rfloor}(-\log \mmp\{S_n-\mu n>t-\mu n\})}{t\log t}=\Big(\frac{1}{\mu+\delta}-\frac{b}{\mu}\Big)(\alpha-1).$$ It was shown in the proof of Theorem 3.5(b) in~\cite{Alsmeyer+Iksanov+Kabluchko:2025} that
$$\lim_{\delta\to 0+}\limsup_{t\to\infty}\frac{1}{t\log t}\sum_{n=\lfloor (\mu+\delta)^{-1}t\rfloor+1}^{\lfloor \mu^{-1}t \rfloor}(-\log \mmp\{S_{n}>t\})\ =\ 0.$$ The proof of Theorem \ref{thm: regular deviation} is complete.
\end{proof}
\begin{proof}[Proof of Theorem \ref{thm:semi}]
It suffices to prove
$$ \lim_{t\to\infty}\frac{-\log \prod_{n=\lfloor b\mu^{-1}t\rfloor}^{\lfloor \mu^{-1} t\rfloor}\mmp\{S_{n}> t\}}{t^{\alpha+1} \ell(t)}\ =\ \frac{(1-b)^{\alpha+1}}{\mu(\alpha+1)}.$$

As in \cite{Alsmeyer+Iksanov+Kabluchko:2025}, an application of Theorem 2.1 in \cite{Borovkov+Mogulskii:2006} yields
\begin{gather*}
\sum_{n=\lfloor b\mu^{-1}t\rfloor}^{\lfloor \mu^{-1} t\rfloor}\big(-\log \mmp\{S_n-\mu n>t-\mu n\}\big)~\sim~ \sum_{n=\lfloor b\mu^{-1}t\rfloor}^{\lfloor \mu^{-1} t\rfloor} (t-\mu n)^\alpha \ell(t-\mu n)\\
\sim~ \int_{b\mu^{-1}t}^{\mu^{-1}t}(t-\mu x)^\alpha\ell(t-\mu x){\rm d}x\ =\ \frac{1}{\mu}\int_0^{(1-b)t} x^\alpha \ell(x)\ {\rm d}x~\sim~\frac{(1-b)^{\alpha+1}}{\mu(\alpha+1)}t^{\alpha+1}\ell(t),\quad t\to\infty.
\end{gather*}
The proof of Theorem \ref{thm:semi} is complete.
\end{proof}

\begin{proof}[Proof of Theorem \ref{thm: light deviation}]
To prove~\eqref{thm: light deviation_s_claim}, we first write
\begin{multline}\label{eq:thm26_first_order_asymp}
-t^{-2} \log \prod_{n=\lfloor b\mu^{-1}t\rfloor}^{\lfloor \mu^{-1}t\rfloor}\mmp\{S_n>t\}= t^{-2}\int_{\lfloor b\mu^{-1}t\rfloor}^{\lfloor \mu^{-1}t\rfloor}\big(-\log \mmp\{S_{\lfloor y\rfloor}>t\}\big){\rm d}y\\
= \int_{t^{-1}\lfloor b\mu^{-1}t\rfloor}^{t^{-1}\lfloor \mu^{-1}t\rfloor}\frac{-y\log \mmp\{S_{\lfloor ty\rfloor}>t\}}{ty}{\rm d}y.
\end{multline}
The assumptions $\mmp\{\xi\leq x\}<1$ for all $x\geq 0$ and $\me [\eee^{s\xi}]<\infty$ for some $s>0$ ensure that $I(x)<\infty$ for all $x>0$. Indeed, by Markov's inequality, for all $n\in\mn$ and all $x,u>0$, $$-\log \mmp\{\xi>x\}\geq ux-\log \me [\eee^{u\xi}],$$ whence $0\leq I(x)\leq -\log \mmp\{\xi>x\}<\infty$ for all $x>0$. Further, by the Chernoff-Cramer theorem (see, for instance, Theorem 2.1 on p.~116 in \cite{Deheuvels:2007}), for each $y\in [b\mu^{-1}, \mu^{-1}]$,
\begin{equation}\label{eq:inter4}
\lim_{t\to\infty}\frac{-\log \mmp\{S_{\lfloor ty\rfloor}>t\}}{ty}=I(1/y).
\end{equation}
Furthermore, the convergence is uniform in $y\in [b\mu^{-1}, \mu^{-1}]$ by Polya's theorem (see, for instance, p.~113 in \cite{Boas:1996}) because $y\mapsto I(1/y)$ is a continuous function ($I$ is convex) and, for each $t$, the function $-\log \mmp\{S_{\lfloor ty\rfloor}>t\}/(ty)$ is nonincreasing. Hence,
\begin{equation}\label{eq:inter3}
\lim_{t\to\infty} \int_{t^{-1}\lfloor b\mu^{-1}t\rfloor}^{t^{-1}\lfloor \mu^{-1}t\rfloor}\frac{-y\log \mmp\{S_{\lfloor ty\rfloor}>t\}}{ty}{\rm d}y=\int_{b\mu^{-1}}^{\mu^{-1}}yI(1/y){\rm d}y<\infty.
\end{equation}
The proof of~\eqref{thm: light deviation_s_claim} is complete.

To prove~\eqref{thm: light deviation_d_claim}, we apply precise large deviations results both in the linear and moderate settings. First, in view of Theorem 10 on p.~230 in \cite{Petrov:1975} (see also the statement of Theorem 2 on p.~219 in \cite{Petrov:1975} and comments following it), the assumption $\me [\eee^{s\xi}]<\infty$ for some $s>0$ ensures that there is a constant $\beta > 0$ such that
$$\mathbb{P}\{S_n - \mu n > \sigma x\} = \Big(1 - \Phi\Big(\frac{x}{n^{1/2}}\Big)\Big)\exp\Big(\frac{x^3}{n^2} \lambda\Big(\frac{x}{n}\Big)\Big) (1 + \rho\beta)$$ for large $n$ and $x\in [0,\beta n]$, where $\rho=\rho(x,n,\beta)$ is bounded by an absolute constant, $\lambda$ is a power series with a positive radius of convergence, called Cram\'{e}r's series, $\sigma^2={\rm Var}\,[\xi]\in (0,\infty)$, and $$y\mapsto \Phi(y):=\frac{1}{\sqrt{2\pi}}\int_{-\infty}^{y}\eee^{-s^2/2}{\rm d}s,\quad y\in\mr$$ is the standard normal distribution function.

As a consequence, for a sufficiently small $\varepsilon>0$ satisfying
$b<\mu/(\mu+\varepsilon)$,
\begin{align*}
&\hspace{-0.3cm}- \log \prod_{n=\lfloor t/(\mu+\varepsilon) \rfloor +1}^{\lfloor t/\mu \rfloor} \mathbb{P}\{S_n > t\}
= - \log \prod_{n=\lfloor t/(\mu+\varepsilon) \rfloor +1}^{\lfloor t/\mu \rfloor}  \mathbb{P}\{S_n - \mu n > t - \mu n\}\\
&= - \sum_{n=\lfloor t/(\mu+\varepsilon) \rfloor +1}^{\lfloor t/\mu \rfloor} \log \Big(1 - \Phi\Big(\frac{t - \mu n}{\sigma \sqrt{n}}\Big)\Big)
- \sum_{n=\lfloor t/(\mu+\varepsilon) \rfloor +1}^{\lfloor t/\mu \rfloor} \frac{(t - \mu n)^3}{\sigma^3 n^2} \lambda\Big(\frac{t - \mu n}{\sigma n}\Big) + O(t)\\
& =I + II + O(t).
\end{align*}

We shall need an asymptotic expansion of $\Phi$:
$$1 - \Phi(x) = \frac{\eee^{-x^2/2}}{(2\pi)^{1/2}x} \big(1 + O(x^{-2})\big),\quad x \to \infty.$$ Fix some $\delta \in (0, \frac{1}{6})$ and put $t_1:=\lfloor {t}/(\mu+\varepsilon) \rfloor + 1$ and $t_2:= \lfloor {t}/{\mu} - t^{1/2+\delta} \rfloor$. For large $t$, we further split $I$ depending on whether $t-\mu n\leq \mu t^{1/2+\delta}$ or $t-\mu n>\mu t^{1/2+\delta}$: $$I=\sum_{n=t_2+1}^{\lfloor t/\mu\rfloor}\ldots+\sum_{n=t_1}^{t_2}\ldots=:I_1+I_2.$$ We proceed by estimating $I_1$:
\begin{multline*}
I_{1} = - \sum_{n=t_2+1}^{\lfloor t/\mu \rfloor} \log \Big(1 - \Phi\Big(\frac{t - \mu n}{\sigma n^{1/2}}\Big)\Big)
\le -(\lfloor t/\mu\rfloor-t_2) 
\log \Big(1 - \Phi\Big (\frac{\mu t^{1/2+\delta} }{\sigma t_2^{1/2}}\Big)\Big)~\sim~\frac{\mu^3}{2\sigma^2}t^{1/2+3\delta}\\= o(t),\quad t\to\infty
\end{multline*}
having utilized the fact that the function $x\mapsto -\log (1-\Phi(x))$ is increasing on $(0,\infty)$. As for the second fragment we infer
\begin{align*}
I_{2}&= - \sum_{n=t_1}^{t_2} \log \Big(1 - \Phi\Big(\frac{t - \mu n}{\sigma n^{1/2}}\Big)\Big)\\
&= \sum_{n=t_1}^{t_2} \frac{1}{2} \Big(\frac{t - \mu n}{\sigma n^{1/2}}\Big)^2 + \sum_{n=t_1}^{t_2} \log \Big(\frac{t - \mu n}{n^{1/2}}\Big)+ \sum_{n=t_1}^{t_2}  \log (2\pi\sigma^{-2})^{1/2}+\sum_{n=t_1}^{t_2} \log \Big(1 + O\Big(\frac{n}{(t - \mu n)^2}\Big)\Big)\\
&= t^2 \cdot \frac{1}{2\sigma^2t } \sum_{n=t_1}^{t_2} \frac{t}{n} \Big(1 - \frac{\mu n}{t}\Big)^2 + \sum_{n=t_1}^{t_2} \log \Big(\frac{t - \mu n}{n^{1/2}}\Big)+\sum_{n=t_1}^{t_2}  \log (2\pi\sigma^{-2})^{1/2}+
\sum_{n=t_1}^{t_2}  O(t^{-2\delta})\\
&= t^2 \cdot \frac{1}{2\sigma^2t } \sum_{n=t_1}^{\lfloor t/\mu\rfloor} \frac{t}{n} \Big(1 - \frac{\mu n}{t}\Big)^2 +O(t^{\frac{1}{2}+3\delta})+ \sum_{n=t_1}^{t_2} \log \Big(\frac{t - \mu n}{n^{1/2}}\Big)+\sum_{n=t_1}^{t_2}  \log \left(\frac{2\pi}{\sigma^{2}}\right)^{\frac{1}{2}}+\sum_{n=t_1}^{t_2}  O(t^{-2\delta})\\
&=: t^2 \cdot I_{2,1} + \sum_{n=t_1}^{t_2} \log \Big(\frac{t - \mu n}{n^{1/2}}\Big)+ O(t),
\end{align*}
where the penultimate equality follows from
$$
\sum_{n=t_2+1}^{\lfloor t/\mu\rfloor} \frac{t}{n} \Big(1-\frac{\mu n}{t}\Big)^2\leq \frac{1}{t_2 t} \sum_{n=t_2+1}^{\lfloor t/\mu\rfloor} (t - \mu n)^2\leq \frac{\lfloor t/\mu\rfloor-t_2}{t_2 t} (t - \mu t_2)^2=O(t^{-1/2+3\delta}).
$$
For each $t>0$, put $g_t(x):=(t/x)(1-\mu x/t)^2$, $x>0$. The function $g_t$ is decreasing on $[t/(\mu+\varepsilon), t/\mu]$. Hence,
\begin{multline*}
2\sigma^2 tI_{2,1}=\sum_{n=t_1}^{\lfloor t/\mu\rfloor}g_t(n)=\int_{t/(\mu+\varepsilon)}^{t/\mu}g_t(x){\rm d}x+O(g_t(t/(\mu+\varepsilon)))+O(g_t(t/\mu))\\=t\int_{1/(\mu+\varepsilon)}^{1/\mu} y^{-1}(1-\mu y)^2{\rm d}y+O(1),\quad t\to\infty.
\end{multline*}
Further, we claim that
\begin{equation}\label{eq:stirling_proof1}
\sum_{n=t_1}^{t_2} \log \Big(\frac{t - \mu n}{n^{1/2}}\Big)=\sum_{n=t_1}^{t_2} \log(t - \mu n)-\frac{1}{2}\sum_{n=t_1}^{t_2} \log n=\frac{\varepsilon}{2 \mu (\mu + \varepsilon)}t \log t+O(t),\quad t\to\infty.
\end{equation}
Indeed, by Stirling's formula, $$\sum_{n=t_1}^{t_2} \log n=\frac{\varepsilon}{\mu (\mu + \varepsilon)}t \log t+O(t),\quad t\to\infty.$$ The first summand in the penultimate formula admits the same expansion. By monotonicity, it suffices to prove this fact with $t=\mu k$ for $k\in\mn$. Another application of Stirling's formula yields
\begin{multline*}
\sum_{n=\lfloor (\mu k)/(\mu+\varepsilon)\rfloor+1}^{\lfloor k-(\mu k)^{1/2+\delta}\rfloor }\log (\mu k - \mu n)=\sum_{n=-\lfloor -(\mu k)^{1/2+\delta}\rfloor}^{k-\lfloor (\mu k)/(\mu+\varepsilon)\rfloor-1}\log n +O(k)=\frac{\varepsilon}{\mu + \varepsilon}k\log k+O(k)\\
=\frac{\varepsilon}{\mu(\mu + \varepsilon)}t\log t+O(t),\quad t\to\infty.
\end{multline*}
Thus,~\eqref{eq:stirling_proof1} has been justified. Summarizing
\begin{equation}
\label{eq:w1}
I = t^2 \cdot \frac{1}{2\sigma^2} \int_{1/(\mu+\varepsilon) 
}^{1/\mu} \frac{1}{y} (1 - \mu y)^2 {\rm d}y + \frac{\varepsilon}{2\mu (\mu + \varepsilon)} t \log t + O(t),\quad t\to\infty.
\end{equation}
To estimate $II$, observe that decreasing $\varepsilon>0$ if necessary, we can assume that the function  $y\mapsto \frac{(1-\mu y)^3}{y^2}\lambda(t/(\sigma y)-\mu/\sigma)$ has a bounded derivative on $[1/(\mu + \varepsilon), 1/\mu]$. Thus, approximating the Riemann sum by the corresponding integral yields
\begin{multline}\label{eq:w2}
II = -\frac{1}{\sigma^3 } \sum_{n=t_1}^{\lfloor t/\mu\rfloor} \frac{(t-\mu n)^3}{n^2} \lambda\bigg(\frac{t}{\sigma n} - \frac{\mu}{\sigma}\bigg)=-\frac{t^2}{\sigma^3}\frac{1}{t}\sum_{n=t_1}^{\lfloor t/\mu\rfloor} \frac{(1-\mu n/t)^3}{(n/t)^2} \lambda\bigg(\frac{t}{\sigma n} - \frac{\mu}{\sigma}\bigg)\\
= -\frac{t^2}{\sigma^3} \int_{1/(\mu + \varepsilon)}^{1/\mu} \frac{(1-\mu y)^3}{y^2} \lambda\bigg(\frac{1}{\sigma y} - \frac{\mu}{\sigma}\bigg){\rm d}y + O(t),\quad t\to\infty.
\end{multline}

Recall that $\varepsilon>0$ is chosen such that $b<\mu/(\mu+\varepsilon)$. We 
now estimate
$$
- \log  \prod_{n = \lfloor b\mu^{-1}t  \rfloor}^{\lfloor  t/(\mu+\varepsilon) \rfloor}
 \mathbb{P}\{S_n > t\}.
$$
To this end, we invoke a precise large deviation result proved in Theorem 1 of~\cite{Bahadur:Rao:1960} and Theorem 1 of~\cite{Petrov:1965}. If $\theta \in (\mu+\varepsilon, A_0-\varepsilon)$, then
$$\mathbb{P}\{S_n > \theta n\} = \frac{\eee^{-nI(\theta)}}{\alpha \nu
(2\pi n)^{1/2}} (1 + o(1)),\quad n\to\infty,$$
where $\alpha$ is a parameter such that $\Lambda^\prime(\alpha) = \theta$ and $\nu^2 = \Lambda^{\prime\prime}
(\alpha)$. Given $n$ and $t$, let $\alpha_n$ denote a parameter satisfying $\Lambda^\prime(\alpha_n) = t/n 
=:\theta_n$ and put $\nu_n^2:= \Lambda^{\prime\prime}(\alpha_n)$. Then the term $o(1)$ is uniformly small (see Theorem 1 in \cite{Petrov:1965}) and all the parameters $\alpha_n$ and $\nu_n$ are bounded. In view of \eqref{eq:inter4}, for each $t>0$, the function $x\mapsto (x/t)I(t/x)$ is nonincreasing on $[b\mu^{-1}t, \mu^{-1}t]$ as the pointwise limit of nonincreasing functions. This ensures that $$\sum_{n = \lfloor b\mu^{-1}t  \rfloor}^{\lfloor  t/(\mu+\varepsilon) \rfloor }(n/t)I(t/n)=t\int_{b/\mu}^{1/(\mu + \varepsilon)}yI(1/y){\rm d}y+O(1),\quad t\to\infty.$$ Using this together with Stirling's formula we infer
\begin{align*}
- \log  \prod_{n = \lfloor b\mu^{-1}t  \rfloor}^{\lfloor  t/(\mu+\varepsilon) \rfloor }
 \mathbb{P}\{S_n > t\}&= \sum_{n = \lfloor b\mu^{-1}t  \rfloor}^{\lfloor  t/(\mu+\varepsilon) \rfloor } n I(\theta_n) + \frac{1}{2} \sum_{n = \lfloor b\mu^{-1}t  \rfloor}^{\lfloor  t/(\mu+\varepsilon) \rfloor +1} \log n
 -\sum_{n = \lfloor b\mu^{-1}t  \rfloor}^{\lfloor  t/(\mu+\varepsilon) \rfloor }\log \bigg( \frac{1+o(1)}{\alpha_n \nu_n \sqrt{2\pi}}\bigg)\\
&= t^2 \cdot \frac{1}{t} \sum_{n = \lfloor b\mu^{-1}t  \rfloor}^{\lfloor  t/(\mu+\varepsilon) \rfloor }  \frac{n}{t} I(t/n) + \frac{1}{2} \sum_{n = \lfloor b\mu^{-1}t  \rfloor}^{\lfloor  t/(\mu+\varepsilon) \rfloor +1} \log n + O(t)\\
&= t^2\cdot \int_{b/\mu}^{1/(\mu + \varepsilon)} y I(1/y) {\rm d}y + \frac{1}{2} \bigg(\frac{1}{\mu + \varepsilon} - \frac{b}{\mu}\bigg) t \log t + O(t),\quad t\to\infty.
\end{align*}
Combining pieces together we obtain
\begin{multline*}
-\log \prod_{n=\lfloor b\mu^{-1}t\rfloor}^{\lfloor\mu^{-1}t \rfloor}\mmp\{S_n>t\}=t^2\cdot\bigg(\frac{1}{2\sigma^2} \int_{1/(\mu+\varepsilon)}^{1/\mu} \frac{1}{y} (1 - \mu y)^2 {\rm d}y+\int_{b/\mu}^{1/(\mu + \varepsilon)} y I(1/y) {\rm d}y\\ -\frac{1}{\sigma^3}\int_{1/(\mu + \varepsilon)}^{1/\mu} \frac{(1-\mu y)^3}{y^2} \lambda\Big(\frac{1}{\sigma y} - \frac{\mu}{\sigma}\Big){\rm d}y\bigg) + \frac{1-b}{2\mu}  t \log t + O(t).
\end{multline*}
On the other hand, according to the first order asymptotic secured by~\eqref{eq:thm26_first_order_asymp} and~\eqref{eq:inter3}, we must have
\begin{multline*}
\frac{1}{2\sigma^2} \int_{1/(\mu+\varepsilon)}^{1/\mu} \frac{1}{y} (1 - \mu y)^2 {\rm d}y+\int_{b/\mu}^{1/(\mu + \varepsilon)} y I(1/y) {\rm d}y\\ -\frac{1}{\sigma^3}\int_{1/(\mu + \varepsilon)}^{1/\mu} \frac{(1-\mu y)^3}{y^2} \lambda\Big(\frac{1}{\sigma y} - \frac{\mu}{\sigma}\Big){\rm d}y=\int_{b/\mu}^{1/\mu}yI(1/y){\rm d}y,
\end{multline*}
for any sufficiently small $\varepsilon>0$. Thus, relation \eqref{thm: light deviation_d_claim} does indeed hold true. The proof of Theorem \ref{thm: light deviation} is complete.
\end{proof}

In view of Lemma \ref{lem:shirai} and formula \eqref{eq:appr2}, it is enough to check that relations \eqref{thm: light deviation2_s_claim} and \eqref{thm: light deviation2_d_claim} stated in Theorem \ref{thm: light deviation2} hold with $-\log \prod_{n=\lfloor \mu^{-1} t\rfloor+1}^{\lfloor b\mu^{-1}t\rfloor}\mmp\{S_n>t\}$ replacing $-\log \mmp\{\hat N(t)=\lfloor b\mu^{-1}t\rfloor \}$.

\begin{proof}[Proof of Theorem~\ref{thm: light deviation2}]
As far as ~\eqref{thm: light deviation2_s_claim} is concerned, it suffices to show that
$$
\lim_{t\to\infty}\frac{-\log \prod_{n=\lfloor \mu^{-1}t\rfloor}^{\lfloor b \mu^{-1} t\rfloor}\mmp\{S_n\leq t\}}{t^2}=\int_{\mu^{-1}}^{b\mu^{-1}}yI^\ast(1/y){\rm d}y<\infty.
$$
The argument behind proving this limit relation mimics that exploited in the proof of~\eqref{thm: light deviation_s_claim} in Theorem \ref{thm: light deviation}. The only difference is that now we invoke a version of the Chernoff-Cramer theorem for the left distribution tail (see again Theorem 2.1 on p.~116 in \cite{Deheuvels:2007}) saying that, for each $y\in [\mu^{-1}, b\mu^{-1}]$,
\begin{equation}\label{eq:inter2}
\lim_{t\to\infty}\frac{-\log \mmp\{S_{\lfloor ty\rfloor}\leq t\}}{ty}=I^\ast(1/y).
\end{equation}
Finiteness of the right-hand side is secured by $0\leq I^\ast(1/y)\leq -\log \mmp\{\xi\leq 1/y\}<\infty$ for $y\in [\mu^{-1}, b\mu^{-1}]$, where the last inequality is justified by the assumption $\mmp\{\xi\leq b^{-1}\mu\}>0$. We omit further details.

The second claim~\eqref{thm: light deviation2_d_claim} can be proved by 
the same reasoning as in the proof of~\eqref{thm: light deviation_d_claim} but applied to the random walk $(-S_n)_{n\geq 1}$ with the generic step $-\xi$. This also explains the appearance of the function $I^{\ast}$ derived from the moment generating function of the distribution of $-\xi$ in~\eqref{thm: light deviation2_s_claim} and~\eqref{thm: light deviation2_d_claim}.
\end{proof}

\subsection{Proof of Proposition \ref{thm:local limit}}

We only need to prove relation \eqref{eq:diver}. Once this is done, formula \eqref{eq:local} is secured by Proposition \ref{prop:Dolg}. Recall the notation $\nu(t)=\inf\{k\geq 1: S_k>t\}$ for $t\geq 0$ and observe that
$${\rm Var}\,[\hat N(t)]=\sum_{k\geq 1}\mmp\{\nu(t) > k\}\mmp\{\nu(t)\leq k\},\quad t\geq 0.$$ For $t>0$, let $m(t)$ denote the median of the distribution of $\nu(t)$, that is, any positive integer satisfying
$$\mmp\{\nu(t) \geq m(t)\}\geq 1/2\quad\text{and}\quad\mmp\{\nu(t) \leq m(t)\}\geq 1/2.$$ Plainly, $\lim_{t\to\infty}\nu(t)=\infty$ a.s.\ entails $\lim_{t\to\infty}m(t)=\infty$. Let $a:(0,\infty)\to\mn$ be a function to be specified later. We estimate ${\rm Var}\,[\hat N(t)]$ from below:
\begin{align*}
{\rm Var}\,[\hat N(t)]&\geq \sum_{k=m(t)}^{m(t)+a(t)-1}\mmp\{\nu(t) > k\}\mmp\{\nu(t)\leq k\}\geq \mmp\{\nu(t)\leq m(t)\}\sum_{k=m(t)}^{m(t)+a(t)-1}\mmp\{\nu(t) > k\}\\
&\geq\frac{1}{2}a(t)\mmp\{\nu(t)> m(t)+a(t)-1\}=\frac{1}{2}a(t)\mmp\{S_{m(t)+a(t)-1}\leq t\}.
\end{align*}
Thus,~\eqref{eq:diver} follows once we can find $a$ such that
\begin{equation}\label{eq:proof_of_var_divergencea1}
\lim_{t\to\infty}a(t)=\infty\quad\text{and}\quad \limsup_{t\to\infty}\mmp\{S_{m(t)+a(t)-1}> t\}<1.
\end{equation}
In what follows, let $t>0$ be large enough so that $m(t)\geq 2$. Note that
\begin{align*}
&\hspace{-1cm}\mmp\{S_{m(t)+a(t)-1}> t\}=\int_{[0,\,\infty)}\mmp\{t-y<S_{m(t)-1}\}\mmp\{S_{a(t)}\in {\rm d}y\}\\
&=\mmp\{S_{m(t)-1}> t\}+\int_{[0,\,\infty)}\mmp\{t-y<S_{m(t)-1}\leq t\}\mmp\{S_{a(t)}\in {\rm d}y\}\\
&= 1-\mmp\{\nu(t)\geq m(t)\}+\int_{[0,\,\infty)}\mmp\{t-y<S_{m(t)-1}\leq t\}\mmp\{S_{a(t)}\in {\rm d}y\}\\
&\leq 1/2+\int_{[0,\,\infty)}\mmp\{t-y\leq S_{m(t)-1}\leq t\}\mmp\{S_{a(t)}\in {\rm d}y\}.
\end{align*}
The integrand on the right-hand side can be estimated with the help of Theorem 9 on p.~49 in~\cite{Petrov:1975}. The cited result implies that, for some absolute constant $C>0$ (which depends on the distribution of $\xi$),
\begin{align*}
\mmp\{S_{m(t)+a(t)-1}> t\}&\leq 1/2+C\int_{[0,\,\infty)}\left(\frac{y+1}{m(t)-1}\wedge 1\right)\mmp\{S_{a(t)}\in {\rm d}y\}\\
&=1/2+C\,\mathbb{E}\left[\frac{S_{a(t)}+1}{m(t)-1}\wedge 1\right]\\
&\leq 1/2+C\sum_{k=1}^{a(t)}\mathbb{E}\left[\frac{\xi_k}{m(t)-1}\wedge 1\right]+\frac{C}{m(t)-1}\\
&= 1/2+C\,a(t)\mathbb{E}\left[\frac{\xi}{m(t)-1}\wedge 1\right]+\frac{C}{m(t)-1}.
\end{align*}
Here, $x\wedge 1=\min (x,1)$ for $x\geq 0$, and we have used subadditivity of $x\mapsto x\wedge 1$ on $[0,\infty)$ for the last inequality. Using $\lim_{t\to\infty}m(t)=\infty$ and Lebesgue's dominated convergence theorem, we conclude that
$$
\lim_{t\to\infty}\mathbb{E}\left[\frac{\xi}{m(t)-1}\wedge 1\right]=0.
$$
Hence, we can choose $a$ satisfying $\lim_{t\to\infty}a(t)=\infty$ and $$\lim_{t\to\infty}a(t)\mathbb{E}\left[\frac{\xi}{m(t)-1}\wedge 1\right]=0.$$ With this choice, the $\limsup$ in~\eqref{eq:proof_of_var_divergencea1} does not exceed $1/2$. This finishes the proof of both~\eqref{eq:diver} and Proposition~\ref{thm:local limit}.

\section{Appendix}

Here, we formulate a local central limit theorem for triangular arrays of independent Bernoulli random variables.

Let $(\eta_{k,t})_{k\geq 1, t\geq 0}$ be a family of Bernoulli random variables defined on a common probability space. We assume that, for each $t\geq 0$, the random variables $\eta_{1,t}$, $\eta_{2,t},\ldots$ are independent. Put $$Y(t):=\sum_{k\geq 1}\eta_{k,t},\quad t\geq 0$$ and, for each $k\geq 1$ and each $t\geq 0$, put $p_{k,t}:=\mmp\{\eta_{k,t}=1\}$. Fix any $t_0>0$. By the Borel-Cantelli lemma, the series defining $Y(t_0)$ converges a.s.\ if, and only if, $\sum_{k\ge 1}p_{k,t_0}<\infty$. On the other hand, if $\sum_{k\ge 1}p_{k,t_0}=\infty$, then the series diverges a.s. We assume in what follows that $\sum_{k\ge 1}p_{k,t}<\infty$ for all $t\geq 0$. In particular, $\me [Y(t)]=\sum_{k\geq 1}p_{k,t}<\infty$ and ${\rm Var}\,[Y(t)]=\sum_{k\geq 1}p_{k,t}(1-p_{k,t})\leq \me [Y(t)]<\infty$ for all $t\geq 0$.

Given next is the result which follows from Theorem 2 in~\cite{Bender:1973}.
\begin{proposition}\label{prop:Dolg}
Assume that $\lim_{t\to\infty}{\rm Var}\,[Y(t)]=\infty$. Then $$\lim_{t\to\infty}\sup_{k\geq 0}\Big|({\rm Var}\,[Y(t)])^{1/2}\mmp\{Y(t)=k\}-\frac{1}{(2\pi)^{1/2}}\exp\Big(-\Big(\frac{k-\me [Y(t)]}{({\rm Var}\,[Y(t)])^{1/2}}\Big)^2\Big/2\Big)\Big|=0.$$ In particular, $$\mmp\{Y(t)=\lfloor \me [Y(t)]\rfloor\}~\sim~\frac{1}{(2\pi {\rm Var}\,[Y(t)])^{1/2}},\quad t\to\infty.$$
\end{proposition}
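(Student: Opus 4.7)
The plan is to proceed by Fourier inversion, which is natural here because $Y(t)$ is $\mn_0$-valued. Put $\sigma_t^2:={\rm Var}\,[Y(t)]$ and $m_t:=\me[Y(t)]$, and denote by $\phi_t(u):=\prod_{k\geq 1}(1-p_{k,t}+p_{k,t}\eee^{iu})$ the characteristic function of $Y(t)$ and by $\tilde\phi_t(u):=\eee^{-iu m_t}\phi_t(u)$ its centered version. Fourier inversion yields
$$\sigma_t\,\mmp\{Y(t)=k\}=\frac{1}{2\pi}\int_{-\pi\sigma_t}^{\pi\sigma_t}\eee^{-i(k-m_t)v/\sigma_t}\tilde\phi_t(v/\sigma_t)\,{\rm d}v,$$
while the target Gaussian density admits the representation
$$\frac{1}{(2\pi)^{1/2}}\exp\!\Big(-\frac{(k-m_t)^2}{2\sigma_t^2}\Big)=\frac{1}{2\pi}\int_{-\infty}^{\infty}\eee^{-i(k-m_t)v/\sigma_t}\eee^{-v^2/2}\,{\rm d}v.$$
Subtracting and taking absolute values, $\sup_{k\geq 0}$ of the difference is dominated by the $k$-free quantity
$$\frac{1}{2\pi}\int_{|v|\leq\pi\sigma_t}\!\bigl|\tilde\phi_t(v/\sigma_t)-\eee^{-v^2/2}\bigr|\,{\rm d}v\;+\;\frac{1}{2\pi}\int_{|v|>\pi\sigma_t}\!\eee^{-v^2/2}\,{\rm d}v,$$
so the required uniformity in $k$ comes for free and everything reduces to showing that this expression vanishes as $t\to\infty$.

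The tail integral tends to $0$ because $\sigma_t\to\infty$. For the first integral I would apply Lebesgue's dominated convergence theorem. The integrable majorant is supplied by the exact Bernoulli identity
$$|1-p+p\eee^{iu}|^2=1-2p(1-p)(1-\cos u),\quad u\in\mr,$$
combined with $1-x\leq\eee^{-x}$, which gives
$$|\tilde\phi_t(u)|^2=\prod_{k\geq 1}\bigl[1-2p_{k,t}(1-p_{k,t})(1-\cos u)\bigr]\leq \exp\!\bigl(-2\sigma_t^2(1-\cos u)\bigr).$$
On $|u|\leq\pi$ one has $1-\cos u\geq 2u^2/\pi^2$, so substituting $u=v/\sigma_t$ produces the uniform-in-$t$ bound $|\tilde\phi_t(v/\sigma_t)|\leq \eee^{-2v^2/\pi^2}$ on the entire integration range. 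Together with $\eee^{-v^2/2}$, this delivers the desired majorant.

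For the pointwise limit $\tilde\phi_t(v/\sigma_t)\to\eee^{-v^2/2}$ at a fixed $v\in\mr$, I would use the cumulant expansion
$$\log\tilde\phi_t(u)=-\frac{\sigma_t^2\,u^2}{2}+\sum_{m\geq 3}\frac{K_m(t)(iu)^m}{m!},\qquad K_m(t):=\sum_{k\geq 1}\kappa_m(p_{k,t}),$$
where $\kappa_m(p)$ is the $m$th cumulant of a Bernoulli$(p)$ law. Since $\kappa_m$ is a polynomial that vanishes at $p=0$ and $p=1$, there is a constant $c_m$ with $|\kappa_m(p)|\leq c_m\,p(1-p)$ on $[0,1]$. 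Hence $|K_m(t)|\leq c_m\sigma_t^2$, so $|K_m(t)(iv/\sigma_t)^m/m!|=O(\sigma_t^{-(m-2)})\to 0$ for each fixed $m\geq 3$, and a uniform tail bound on the cumulant series (which also follows from the polynomial bound above) allows interchanging the limit with the infinite sum. Dominated convergence then closes the argument.

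The step I expect to be the main obstacle is this last pointwise convergence: because no moment or domain-of-attraction assumption is made and the success probabilities $p_{k,t}$ can cluster anywhere in $[0,1]$, one must either verify the bounds $|\kappa_m(p)|\leq c_m\,p(1-p)$ with enough control on $c_m$ to justify term-by-term passage to the limit, or replace the cumulant approach by an appeal to the Lindeberg-Feller CLT, in which case the hypothesis $\sigma_t\to\infty$ is precisely what renders the Lindeberg truncation $\sum_{k}\me[(\eta_{k,t}-p_{k,t})^2\1_{\{|\eta_{k,t}-p_{k,t}|>\varepsilon\sigma_t\}}]$ eventually zero, since $|\eta_{k,t}-p_{k,t}|\leq 1<\varepsilon\sigma_t$ for large $t$.
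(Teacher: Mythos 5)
Your proof is correct, and it is genuinely different from what the paper does: the paper proves Proposition~\ref{prop:Dolg} simply by citing Theorem~2 of Bender~\cite{Bender:1973} (a general local limit theorem for integer-valued random variables), whereas you give a self-contained Fourier-analytic derivation. Your three ingredients are exactly the right ones. The Fourier inversion step correctly reduces the $\sup_k$ to a $k$-free integral; the identity $|1-p+p\eee^{iu}|^2=1-2p(1-p)(1-\cos u)$ combined with $1-x\leq\eee^{-x}$ and the lower bound $1-\cos u\geq 2u^2/\pi^2$ on $|u|\leq\pi$ produces the uniform-in-$t$ integrable majorant $\eee^{-2v^2/\pi^2}$; and the Lindeberg--Feller route for the pointwise limit $\tilde\phi_t(v/\sigma_t)\to\eee^{-v^2/2}$ is cleaner and safer than the cumulant expansion you sketch first, precisely because the uniform boundedness $|\eta_{k,t}-p_{k,t}|\leq 1$ together with $\sigma_t\to\infty$ makes the Lindeberg truncation vanish identically for $t$ large, with no need to track the growth of the constants $c_m$ or the radius of convergence of the cumulant series. (The one minor technicality you might mention explicitly is that $Y(t)$ is an infinite sum, but since $\sum_k p_{k,t}<\infty$ the series converges a.s.\ and the triangular-array CLT applies by the usual truncate-and-pass-to-the-limit argument.) What your approach buys is transparency and self-containment: the Bernoulli structure is exploited directly rather than funneled through the hypotheses of a black-box theorem, and the uncertainty you flag in your last paragraph is fully resolved by the Lindeberg--Feller alternative you yourself supply.
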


\noindent \textbf{Acknowledgment.} D.~Buraczewski was supported by the National Science Center, Poland (Opus, grant number 2020/39/B/ST1/00209). A.~Iksanov was supported by the University of Wroc{\l}aw in the framework of the programme `Initiative of Excellence~--~Research University' (IDUB).

\end{document}